\newcommand{\nc}{\newcommand}
\nc{\vg}{\mathfrak{v} } \nc{\wg}{\mathfrak{w} }
\nc{\zg}{\mathfrak{z} } \nc{\ngo}{\mathfrak{n} }
\nc{\kg}{\mathfrak{k} } \nc{\mg}{\mathfrak{m} }
\nc{\bg}{\mathfrak{b} } \nc{\ggo}{\mathfrak{g} }
\nc{\ggob}{\overline{\mathfrak{g}} } \nc{\sog}{\mathfrak{so} }
\nc{\sug}{\mathfrak{su} } \nc{\spg}{\mathfrak{sp} }
\nc{\slg}{\mathfrak{sl} } \nc{\glg}{\mathfrak{gl} }
\nc{\cg}{\mathfrak{c} } \nc{\rg}{\mathfrak{r} }
\nc{\hg}{\mathfrak{h} } \nc{\tg}{\mathfrak{t} }
\nc{\ug}{\mathfrak{u} } \nc{\dg}{\mathfrak{d} }
\nc{\ag}{\mathfrak{a} } \nc{\pg}{\mathfrak{p} }
\nc{\sg}{\mathfrak{s} } \nc{\pca}{\mathcal{P}}
\nc{\nca}{\mathcal{N}} \nc{\lca}{\mathcal{L}}
\nc{\oca}{\mathcal{O}} \nc{\mca}{\mathcal{M}}
\nc{\tca}{\mathcal{T}} \nc{\aca}{\mathcal{A}}
\nc{\cca}{\mathcal{C}} \nc{\gca}{\mathcal{G}}
\nc{\sca}{\mathcal{S}} \nc{\hca}{\mathcal{H}}
\nc{\bca}{\mathcal{B}} \nc{\dca}{\mathcal{D}}
\nc{\val}{\operatorname{val}}
\nc{\vp}{\varphi} \nc{\ddt}{\tfrac{{\rm d}}{{\rm d}t}}
\nc{\im}{\mathtt{i}}
\renewcommand{\Im}{{\rm Im}}
\nc{\SO}{\mathrm{SO}} \nc{\Spe}{\mathrm{Sp}} \nc{\Sl}{\mathrm{SL}}
\nc{\SU}{\mathrm{SU}} \nc{\Or}{\mathrm{O}} \nc{\U}{\mathrm{U}}
\nc{\Gl}{\mathrm{GL}} \nc{\Se}{\mathrm{S}} \nc{\Cl}{\mathrm{Cl}}
\nc{\Spein}{\mathrm{Spin}} \nc{\Pin}{\mathrm{Pin}}
\nc{\G}{\mathrm{GL}_n}
\nc{\g}{\mathfrak{gl}_n}\nc{\Go}{\mathrm{GL}_8}
\nc{\RR}{{\Bbb R}} \nc{\HH}{{\Bbb H}} \nc{\CC}{{\Bbb C}}
\nc{\ZZ}{{\Bbb Z}} \nc{\FF}{{\Bbb F}} \nc{\NN}{{\Bbb N}}
\nc{\QQ}{{\Bbb Q}} \nc{\PP}{{\Bbb P}}
\nc{\vs}{\vspace{.2cm}} \nc{\vsp}{\vspace{1cm}}
\nc{\ip}{\langle\cdot,\cdot\rangle} \nc{\ipp}{(\cdot,\cdot)}
\nc{\la}{\langle} \nc{\ra}{\rangle} \nc{\unm}{\tfrac{1}{2}}
\nc{\unc}{\tfrac{1}{4}} \nc{\und}{\tfrac{1}{16}}
\nc{\no}{\vs\noindent} \nc{\lam}{\Lambda^2(\RR^n)^*\otimes\RR^n}
\nc{\tangz}{{\rm T}^{\rm Zar}} \nc{\nor}{{\sf n}}
\nc{\eigen}{(k_1<\cdots<k_r;d_1,\ldots,d_r)}
\nc{\eigencero}{(0<k_2<...<k_r;d_1,...,d_r)} \nc{\mum}{/\!\!/}
\nc{\kir}{/\!\!/\!\!/} \nc{\Ri}{\tfrac{4}{||\mu||^2}\Ric_{\mu}}
\nc{\ds}{\displaystyle} \nc{\ben}{\begin{enumerate}}
\nc{\een}{\end{enumerate}} \nc{\f}{\frac}
\nc{\crit}{\operatorname{Crit}}\nc{\Ricci}{\operatorname{Ric}}
\nc{\I}{\operatorname{I}}\nc{\di}{\operatorname{diag}}
\nc{\Id}{\operatorname{Id}} \nc{\He}{\operatorname{Hess}}
\nc{\ad}{\operatorname{ad}} \nc{\Ad}{\operatorname{Ad}}
\nc{\rgg}{\operatorname{rank}} \nc{\Irr}{\operatorname{Irr}}
\nc{\End}{\operatorname{End}} \nc{\Aut}{\operatorname{Aut}}
\nc{\Inn}{\operatorname{Inn}} \nc{\Der}{\operatorname{Der}}
\nc{\Ker}{\operatorname{Ker}} \nc{\Iso}{\operatorname{I}}
\nc{\Diff}{\operatorname{D}} \nc{\Lie}{\operatorname{L}}
\nc{\tr}{\operatorname{tr}} \nc{\dif}{\operatorname{d}}
\nc{\sen}{\operatorname{sen}} \nc{\modu}{\operatorname{mod}}
\nc{\Ric}{\operatorname{R}}
\nc{\Ricg}{\operatorname{Ric^{\gamma}}}
\nc{\Ricc}{\operatorname{Ric^{c}}} \nc{\sym}{\operatorname{sym}}
\nc{\symac}{\operatorname{sym^{ac}}}
\nc{\symc}{\operatorname{sym^{c}}} \nc{\scalar}{\operatorname{sc}}
\nc{\grad}{\operatorname{grad}} \nc{\ricci}{\operatorname{ric}}
\nc{\ricciac}{\operatorname{ric^{ac}}}
\nc{\riccic}{\operatorname{ric^{c}}}
\nc{\riccig}{\operatorname{ric^{\gamma}}}
\nc{\Rin}{\operatorname{M}} \nc{\Le}{\operatorname{L}}
\nc{\tang}{\operatorname{T}} \nc{\level}{\operatorname{level}}
\nc{\rad}{\operatorname{r}} \nc{\abel}{\operatorname{ab}}
\nc{\CH}{\operatorname{CH}} \nc{\mcc}{\operatorname{mcc}}
\nc{\Adj}{\operatorname{Adj}}
\theoremstyle{plain}
\newtheorem{theorem}{Theorem}[section]
\newtheorem{corollary}[theorem]{Corollary}
\newtheorem{lemma}[theorem]{Lemma}
\theoremstyle{definition}
\newtheorem{definition}[theorem]{Definition}
\theoremstyle{remark}
\title{Filiform nilsolitons of dimension $8$}
\author{Romina M. Arroyo}
\address{FaMAF and CIEM, Universidad Nacional de C\'ordoba, 5000 C\'ordoba, Argentina}
\email{arroyo@mate.uncor.edu}
\thanks{2000 {\it Mathematics Subject Classification.} 53C25, 53C30, 22E25. \\
Supported by a fellowship from CONICET and a research grants from
CONICET, FONCYT and Secyt (UNC)}
\begin{document}

\maketitle

\begin{abstract}
A Riemannian manifold $(M,g)$ is said to be {\it Einstein} if its
Ricci tensor satisfies $\ricci(g)=cg$, for some $c \in \RR$. In the
homogeneous case, a problem that is still open is the so called {\it
Alekseevskii Conjecture}. This conjecture says that any homogeneous
Einstein space with negative scalar curvature (i.e. $c<0$) is a {\it
solvmanifold}: a simply connected solvable Lie group endowed with a
left invariant Riemannian metric. The aim of this paper is to
classify Einstein solvmanifolds of dimension $9$ whose nilradicals
are {\it filiform} (i.e. $(n-1)$-step nilpotent and
$n$-dimensional).

\end{abstract}

\section{Introduction}\label{intro}

A Riemannian manifold $(M,g)$ is said to be {\it Einstein} if its
Ricci tensor satisfies $\ricci(g)=cg$, for some $c \in \RR$.
Einstein metrics are often considered as the nicest, or most
distinguished metrics on a given differentiable manifold (see for
instance \cite[Introduction]{Bss}).

In the homogeneous case, a problem that is still open is the so
called {\it Alekseevskii Conjecture} (see \cite[7.57]{Bss}). This
conjecture says that any homogeneous Einstein space with negative
scalar curvature (i.e. $c<0$) is a {\it solvmanifold}: a simply
connected solvable Lie group endowed with a left invariant
Riemannian metric. It is important to note that, nowadays, it is still unknown which solvable Lie groups admit a left invariant Einstein metric.

In \cite{standard}, Lauret has proved that any Einstein
solvmanifold $S$ is {\it standard} (i.e. $[\ag,\ag]=0$, where
$\ag:=[\sg,\sg]^{\perp}$, $\sg$ the Lie algebra of $S$), and the
study of standard Einstein solvmanifolds has been reduced to the
rank-one case, that is, $\dim \ag =1$, where strong structural and
uniqueness results are well known (see \cite{Hbr}).

A nilpotent Lie algebra $\ngo$ is called an {\it Einstein
nilradical} if it is the nilradical (i.e. maximal nilpotent ideal)
of the Lie algebra of an Einstein solvmanifold. It is proved in
\cite{soliton} that $\ngo$ is an Einstein nilradical if and only
if $\ngo$ admits an inner product $\ip$ such that
\begin{equation}\label{nilr}
\Ricci_{\ip}=cI+\phi, \qquad c \in \RR,\qquad \phi \in \Der(\ngo),
\end{equation}
where $\Ricci_{\ip}$ denotes the Ricci operator of the nilpotent Lie group $N$ endowed with the left invariant
metric determined by $\ip$. These metrics are called {\it nilsolitons}, as they are Ricci soliton metrics:
solutions of the Ricci flow which evolves only by scaling and the action of diffeomorphisms. Nilsolitons are
unique up to isometry and scaling, and $\sg$ is completely determined by the Lie algebra $\ngo=[\sg,\sg]$. So
the study of Einstein solvmanifolds is actually a problem on nilpotent Lie algebras (see the survey \cite{surv}
for further information).

In \cite{Wll}, it is proved that any nilpotent Lie algebra of dimension $\leq 6$ is an Einstein nilradical. A
first obstruction for a nilpotent Lie algebra to be an Einstein nilradical is that it has to admit an {\it
$\NN$-gradation} (i.e. $\ngo=\ngo_1\oplus \cdots \oplus\ngo_r$ such that $[\ngo_{i},\ngo_{j}] \subset
\ngo_{i+j}$) (see \cite{Hbr}). This condition is necessary but it is not sufficient, the first examples of $\NN$-graded Lie
algebra which are not Einstein nilradicals were found in \cite{einsteinsolv} and some of them are $7$-dimensional.

A nilpotent Lie algebra $\ngo$ is said to be {\it filiform} if $\dim{\ngo}=n$ and $\ngo$ is $(n-1)$-step
nilpotent. The classification of filiform Einstein nilradicals for $\dim{\ngo}\leq 7$ has been obtained in
\cite{einsteinsolv}.

The aim of this paper is to classify nilpotent filiform Lie algebras of dimension $8$ which are Einstein
nilradicals. After some preliminaries in Section \ref{pre}, we will begin our study in Section \ref{cla} by
classifying nilpotent filiform $\NN$-graded Lie algebras of dimension $8$ up to isomorphism. We will follow the
paper \cite{Nk13}, which is in turn based on results given in \cite{GzHkm}. Table \ref{ta} shows a complete
classification up to isomorphism of $\NN$-graded filiform Lie algebras of dimension $8$. In Section \ref{cle},
we use Table \ref{ta} and a characterization given in \cite{Nk13} to obtain the classification of filiform Lie
algebras of dimension $8$ which are Einstein nilradicals, which is the content of Table \ref{tad}.

\section{Preliminaries}\label{pre}

We consider the vector space
$$
V=\lam=\{\mu:\RR^n\times\RR^n\longrightarrow\RR^n : \mu\;
\mbox{bilinear and skew-symmetric}\},
$$
then $$ \nca=\{\mu\in V:\mu\;\mbox{satisfies Jacobi and is
nilpotent}\}
$$
is an algebraic subset of $V$ as the Jacobi identity and the
nilpotency condition can both be written as zeroes of polynomial
function. There is a natural action of $\G:=\Gl_n(\RR)$ on $V$
given by
\begin{equation*}\label{action}
g.\mu(X,Y)=g\mu(g^{-1}X,g^{-1}Y), \qquad X,Y\in\RR^n, \quad
g\in\G,\quad \mu\in V.
\end{equation*}

Note that each $\mu \in \nca$ defines a nilpotent Lie algebra
given by $(\RR^{n},\mu)$, thus $\nca$ parameterizes the set of all
nilpotent Lie algebras of dimension $n$. Note also that $\mu$ and
$\lambda$ are isomorphic if and only if they lie in the same
$\G$-orbit and so $\nca / \G$ parameterizes the set of isomorphism
classes.

In the filiform case most of the standard invariants from Lie theory (descending and ascending central series,
center, rank, etc) coincide, so next result will be a very useful tool to show that two filiform Lie algebras
are not isomorphic.

\begin{lemma}\label{le}
Let $\ngo$ be a Lie algebra and let $C_{j}(\ngo)$ be its descending
central series (i.e. $C_{0}(\ngo)=\ngo$ and $C_{i+1}(\ngo)=
[\ngo,C_{i}(\ngo)]$, $i \geq 0$). Consider
$$\I_{s,j}(\ngo)=\{[x] \in \ngo / C_{j}(\ngo): \dim \Im (\ad_{x})=s\},\quad s,j \in \NN.$$  If $\vp: \ngo
\longrightarrow \ngo'$ is an isomorphism of Lie algebras then $\tilde{\vp}(\I_{s,j}(\ngo))=\I_{s,j}(\ngo')$,
where $\tilde{\vp}:\ngo/C_{j}(\ngo) \longrightarrow \ngo'/C_{j}(\ngo')$ is the morphism of Lie algebras given by
$\tilde{\vp}[x]=[\vp(x)]'$, $x \in \ngo$.
\end{lemma}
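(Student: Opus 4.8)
The plan is to isolate two naturality properties of a Lie algebra isomorphism $\vp\colon\ngo\to\ngo'$ and to derive the equality $\tilde\vp(\I_{s,j}(\ngo))=\I_{s,j}(\ngo')$ from them; once these are in place the argument is purely formal. The first property is that $\vp$ carries the descending central series of $\ngo$ onto that of $\ngo'$, i.e. $\vp(C_i(\ngo))=C_i(\ngo')$ for all $i$; the second is that $\vp$ conjugates adjoint operators, i.e. $\ad_{\vp(x)}=\vp\circ\ad_x\circ\vp^{-1}$.

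First I would prove $\vp(C_i(\ngo))=C_i(\ngo')$ by induction on $i$. The base case is $\vp(\ngo)=\ngo'$, and the inductive step is immediate from the morphism property, $\vp(C_{i+1}(\ngo))=\vp([\ngo,C_i(\ngo)])=[\vp(\ngo),\vp(C_i(\ngo))]=[\ngo',C_i(\ngo')]=C_{i+1}(\ngo')$. Taking $i=j$ shows that $\vp$ maps $C_j(\ngo)$ onto $C_j(\ngo')$, which is exactly what makes the assignment $\tilde\vp[x]=[\vp(x)]'$ well defined (if $x-y\in C_j(\ngo)$ then $\vp(x)-\vp(y)\in C_j(\ngo')$) and turns $\tilde\vp$ into a linear isomorphism $\ngo/C_j(\ngo)\to\ngo'/C_j(\ngo')$ whose inverse is induced by $\vp^{-1}$.

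Next I would establish the conjugation identity by evaluating both sides on an arbitrary $y'=\vp(y)\in\ngo'$: $\ad_{\vp(x)}(y')=[\vp(x),\vp(y)]'=\vp([x,y])=\vp(\ad_x(y))$. Hence $\Im(\ad_{\vp(x)})=\vp(\Im(\ad_x))$, and since $\vp$ is a linear isomorphism the two images have the same dimension. Therefore $\dim\Im(\ad_x)=s$ forces $\dim\Im(\ad_{\vp(x)})=s$, so $\tilde\vp$ maps $\I_{s,j}(\ngo)$ into $\I_{s,j}(\ngo')$; running the same reasoning for $\vp^{-1}$ (whose induced map is $\tilde\vp^{-1}$) gives the opposite inclusion, and the two together yield the claimed equality.

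I do not expect a substantial obstacle, since the statement is at heart an equivariance assertion and both ingredients above are routine. The one point deserving care beforehand is that $\I_{s,j}(\ngo)$ really is a subset of the quotient, i.e. that the membership condition $\dim\Im(\ad_x)=s$ depends only on the class $[x]$ and not on the chosen representative; this is where I would fix the convention precisely, using $[\ngo,C_j(\ngo)]=C_{j+1}(\ngo)\subseteq C_j(\ngo)$ so that the adjoint action under consideration is compatible with passing to $\ngo/C_j(\ngo)$. Settling this at the outset lets the representative-level computations above transport cleanly to the sets $\I_{s,j}$.
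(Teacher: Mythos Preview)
Your argument is correct and follows essentially the same route as the paper: both proofs rest on the identity $\Im(\ad_{\vp(x)})=\vp(\Im(\ad_x))$, together with the (implicit in the paper, explicit in your write-up) fact that $\vp(C_j(\ngo))=C_j(\ngo')$ so that $\tilde\vp$ is a well-defined isomorphism of quotients. The paper additionally records the quotient-level identity $\Im(\ad_{[\vp(x)]'})=\tilde\vp(\Im(\ad_{[x]}))$, but your representative-level computation already suffices; your closing caveat about well-definedness is a reasonable observation, though it is not actually needed for the lemma as stated.
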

\begin{proof}
Observe first that ${\tilde \vp}$ is an isomorphism of Lie algebras
since $C_{j}(\ngo)$ and $C_{j}(\ngo')$ are ideals of $\ngo$ and
$\ngo'$ respectively and $\vp$ is an isomorphism of Lie algebras.

Let $x \in \ngo$ and we define $V=\Im(\ad_{x})$ and $W=\Im
(\ad_{\vp(x)})$, so $W=\vp(V)$ because
$$
\begin{array}{rcl}
W & = & \{[\vp(x),z]:z\in\ngo'\}=\{[\vp(x),\vp(y)]: y\in
\ngo\}=\{\vp([x,y]):y \in \ngo\} \\
 & = & \vp(\{[x,y]: y \in \ngo\})=\vp(V).
\end{array}
$$
We define also $\overline{V}=\Im(\ad_{[x]})$ y
$\overline{W}=\Im(\ad_{[\vp(x)]'})$. Then
$\overline{V}=\{[[x],[y]]:y \in \ngo\}=\pi(V)$ y
$\overline{W}=\{[[\vp(x)]',[\vp(y)]']:y \in \ngo\}=\pi'(W)$, where
$\pi$ and $\pi'$ denote the canonical projections, so, as
$\tilde{\vp}$ is a isomorphism
$\overline{W}=\pi'(W)=\pi'(\vp(V))=\tilde{\vp}(\pi(V) )$ and then
$$
\begin{array}{rcl}
\Im(\ad_{[\vp(x)]'}) & = &
\pi'(\Im\ad_{\vp(x)})=\pi'(\vp(\Im\ad_{x}))=\tilde{\vp}(\pi(\Im\ad_{x})) \\
 & = & \tilde{\vp}(\Im\ad_{[x]}).
\end{array}
$$
Finally $\tilde{\vp}(\I_{s,j}(\ngo))=\I_{s,j}(\ngo')$.
\end{proof}

Let $G$ be a Lie group with Lie algebra $\ggo$. A left invariant
metric on $G$ will be always identified with the inner product
$\ip$ determined on $\ggo$. The pair $(\ggo,\ip)$ will be referred
to as a {\it metric Lie algebra}.

A consequence of the Alekseevskii Conjecture (see
\cite[7.57]{Bss}), is that a Lie group with a left invariant
metric which is Einstein should be either solvable or compact.
This conjecture is still open.

Any solvable Lie group admits at most one standard Einstein metric up to isometry and scaling. If $S$ is
standard Einstein, then for some distinguished element $H \in \ag$, the eigenvalues of $\ad{H}|_{\ngo}$ are all
positive integers without a common divisor, say $k_{1}< \cdots <k_{r}$. These results were proved by Jens Heber
in \cite{Hbr}.

\begin{definition}\label{eig}
Let $S$ be a standard Einstein solvmanifold and let
$d_{1},\ldots,d_{r}$ be the corresponding multiplicities of
$k_{1}< \cdots <k_{r}$, then the tuple
$$(k;d)=\eigen$$ is called the {\it eigenvalue type} of $S$.
\end{definition}

In every dimension, only finitely many eigenvalue types occur (see
\cite{Hbr}).

\begin{definition}\label{ext}
Let $(\ngo,\ip)$ be a metric nilpotent Lie algebra. A metric solvable Lie algebra $(\sg=\ag \oplus \ngo,\ip')$
such that $[\sg,\sg]=\ngo$ is called a {\it metric solvable extension} of $(\ngo,\ip)$ if $\ngo$ is an ideal of
$\sg$, $[\ag,\ag] \subset \ngo$ and $\ip' \mid_{\ngo \times \ngo}=\ip$.
\end{definition}

\begin{definition}\label{deriva}
A real semisimple derivation $\phi$ of a nilpotent Lie algebra
$\ngo$ is called {\it pre-Einstein} if $\tr(\phi \circ
\psi)=\tr\psi$ for all $\psi \in \Der(\ngo)$.
\end{definition}

A pre-Einstein derivation always exists, is unique up to conjugation and its eigenvalues are rational (see
\cite[Theorem 1]{Nkl2}). If $\ngo$ is an Einstein nilradical, then the derivation $\phi$ of (\ref{nilr}),
which is a multiple of $\ad(H)$, is a pre-Einstein derivation (up to scaling).

\begin{lemma}\label{de}
Let $(\ngo,\ip)$ be a metric nilpotent Lie algebra. Then there exists at most one pre-Einstein derivation $\phi$
of $\ngo$ symmetric with respect to $\ip$.
\end{lemma}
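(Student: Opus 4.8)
The plan is to exploit the linearity of the defining trace condition together with the positivity of the trace of the square of a symmetric operator. Suppose $\phi_1$ and $\phi_2$ are both pre-Einstein derivations of $\ngo$ that are symmetric with respect to $\ip$. The first observation is that $\Der(\ngo)$ is a vector space, so the difference $\phi_1-\phi_2$ again lies in $\Der(\ngo)$; moreover, being a difference of $\ip$-symmetric operators, it is itself $\ip$-symmetric.

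Next I would use the pre-Einstein property of each $\phi_i$. By Definition \ref{deriva}, we have $\tr(\phi_i\circ\psi)=\tr\psi$ for every $\psi\in\Der(\ngo)$ and $i=1,2$. Subtracting these two identities gives $\tr((\phi_1-\phi_2)\circ\psi)=0$ for all $\psi\in\Der(\ngo)$. The key step is then to substitute the derivation $\phi_1-\phi_2$ itself for $\psi$ in this relation, which yields $\tr((\phi_1-\phi_2)^2)=0$.

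Finally I would invoke the symmetry. Since $\phi_1-\phi_2$ is $\ip$-symmetric, in an orthonormal basis it is represented by a real symmetric matrix $A$, so $\tr((\phi_1-\phi_2)^2)=\tr(A^2)=\sum_{i,j}A_{ij}^2$ is the squared Frobenius norm of $A$; this is nonnegative and vanishes only when $A=0$. Hence $\tr((\phi_1-\phi_2)^2)=0$ forces $\phi_1-\phi_2=0$, that is $\phi_1=\phi_2$, which is the asserted uniqueness.

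I do not expect a serious obstacle: the argument is entirely formal once one notices that the difference of two pre-Einstein derivations is again a derivation and may be substituted into the trace identity. The only place where the symmetry hypothesis is genuinely needed is the last step, where the positivity of $\tr(A^2)$ for symmetric $A$ is exactly what upgrades ``the trace of the square vanishes'' to ``the operator vanishes''; indeed, without symmetry a nonzero nilpotent derivation (which exists in any non-abelian nilpotent $\ngo$) already has vanishing trace of its square, so the hypothesis cannot be dropped.
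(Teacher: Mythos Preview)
Your proof is correct and follows essentially the same idea as the paper's: both use that the trace form $(A,B)\mapsto\tr(AB)$ is an inner product on the space $\Der(\ngo)\cap\sym(\ngo,\ip)$ of $\ip$-symmetric derivations. The paper phrases this via Riesz representation---a symmetric pre-Einstein derivation is the unique element representing the linear functional $A\mapsto\tr A$ on that space---while you unpack the same argument directly by subtracting and using positivity of $\tr((\phi_1-\phi_2)^2)$; the content is the same.
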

\begin{proof}
Let $\pg=\Der(\ngo) \cap \sym(\ngo,\ip)$ be. We define on $\pg$
the next inner product: $(A,B)=\tr(AB)$, $A,B \in \pg$.

As $f:\pg \longrightarrow \RR$, $f(A)=\tr(A)$ is a linear
functional then there exists an only $B \in \pg$ such that
$\tr(A)=\tr(AB)$ for all $A \in \pg$, and if $\phi \in \pg$ then
$\phi=B$.
\end{proof}

It follows from Lemma \ref{de} that a pre-Einstein derivation $\phi$
is symmetric with respect to an inner
product $\ip$, then $\ip$ is a nilsoliton if and only if $\ip$ satisfies condition (\ref{nilr}) for a multiple of $\phi$.\\

Let $\ngo=(\RR^{n}, \mu)$ be a nilpotent Lie algebra of dimension $n$ and let $\phi \in \Der(\ngo)$ a
pre-Einstein derivation of $\ngo$. Suppose that all the eigenvalues of $\phi$ are simple. Let $\{e_{i}\}$ be the
basis of eigenvectors for $\phi$ and
$$
\mu(e_{i},e_{j})=\sum_{k=1}^{n}c_{ij}^{k}e_{k},
$$ where $\mu$ is a Lie bracket of $\ngo$ (note that for every pair $(i,j)$, no more
than one of the $c_{ij}^{k}$ is nonzero). In the Euclidean space $\RR^{n}$ with the inner product
$(\cdot,\cdot)$ and orthonormal basis $\{f_{1},\ldots,f_{n}\}$ define the finite subset
$\mathbf{F}=\{\alpha_{ij}^{k}=f_{k}-f_{i}-f_{j}: c_{ij}^{k} \neq 0\}$. Let $L$ be the affine span of
$\mathbf{F}$, the smallest affine subspace of $\RR^{n}$ containing $\mathbf{F}$.

\begin{theorem}\label{nic}
\cite[Theorem 1]{Nk13} Let $\ngo$ be a nilpotent Lie algebra whose
pre-Einstein derivation has all the eigenvalues simple. The
algebra $\ngo$ is an Einstein nilradical if and only if the
orthogonal projection of the origin  of $\RR^{n}$ to $L$ lies in
the interior of the convex hull of $\mathbf{F}$.
\end{theorem}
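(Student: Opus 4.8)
The plan is to reinterpret the existence of a nilsoliton metric as a convexity problem for the diagonal torus action, using the moment-map description of the Ricci operator. Since the pre-Einstein derivation $\phi$ has simple eigenvalues, any inner product making $\phi$ symmetric must have the eigenbasis $\{e_i\}$ orthogonal (eigenvectors of a symmetric operator for distinct eigenvalues are orthogonal); by Lemma~\ref{de} and the remark following it, a nilsoliton forces $\phi$ to be symmetric, so I may restrict attention to diagonal inner products. Every such metric is realised on $(\RR^n,\mu)$ by letting the positive diagonal torus $T=\{g_x=\mathrm{diag}(e^{x_1},\dots,e^{x_n}):x\in\RR^n\}$ act, and under this action the structure constant $c_{ij}^k$ is multiplied by $e^{\la\alpha_{ij}^k,x\ra}$, where $\alpha_{ij}^k=f_k-f_i-f_j$ are exactly the weights collected in $\mathbf F$. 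Thus the whole problem lives on $\mathbf F$ and the torus parameter $x$.

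First I would record two linear constraints satisfied by every weight: $\la\mathbf 1,\alpha_{ij}^k\ra=-1$ and, because $c_{ij}^k\neq0$ forces $\lambda_k=\lambda_i+\lambda_j$, also $\la\lambda,\alpha_{ij}^k\ra=0$, where $\mathbf 1=(1,\dots,1)$ and $\lambda=(\lambda_1,\dots,\lambda_n)$ is the eigenvalue vector of $\phi$. Consequently $\mathbf F$, hence $L$, lies in the affine flat cut out by these equations, and the direction space $U$ of $L$ is contained in $\{\mathbf 1,\lambda\}^\perp$. The two distinguished directions are precisely the ones that must be factored out: the flow of $\mathbf 1$ is the overall homothety $g_x=e^tI$ (nilsolitons are unique only up to scaling), while the space $\dg$ of diagonal derivations equals $(\mathrm{span}\{\alpha_{ij}^k\})^\perp$, and $\exp(\dg)$ acts by automorphisms, hence preserves the isomorphism class. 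A short computation, using $\mathbf 1^\perp\cap\mathrm{span}\{\alpha_{ij}^k\}=U$, then gives $(\RR\mathbf 1+\dg)^\perp=U$, so the genuinely effective directions for the torus search are exactly the directions of $L$.

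With this in hand I would invoke Lauret's moment-map formula $\Ri=m(\mu)$ for the $\G$-action: for a diagonal metric the simple-eigenvalue hypothesis (at most one $c_{ij}^k$ nonzero for each $(i,j)$) makes $m(\mu)$ diagonal, and its diagonal, as a vector, equals the convex combination $\sum p_{ij}^k(x)\,\alpha_{ij}^k\in\mathrm{conv}(\mathbf F)$ with $p_{ij}^k(x)=|c_{ij}^k|^2e^{2\la\alpha_{ij}^k,x\ra}/\|g_x.\mu\|^2$. Since $\Ricci$ is a positive multiple of $m(\mu)$, the nilsoliton equation $\Ricci=cI+\phi$ with $\phi\in\Der(\ngo)$ is equivalent to $m(g_x.\mu)\in\RR\mathbf 1+\dg=U^\perp$ (the diagonality forcing $\phi\in\dg$), i.e. to the vanishing of the $U$-component of $\sum p_{ij}^k(x)\alpha_{ij}^k$. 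As this $U$-component is $\tfrac12\nabla_U\log\|g_x.\mu\|^2$, the nilsoliton metrics are exactly the critical points along $U$ of the convex function $\Phi(x)=\|g_x.\mu\|^2=\sum|c_{ij}^k|^2e^{2\la\alpha_{ij}^k,x\ra}$.

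It remains to decide when $\Phi|_U$ attains a minimum. Writing $p_0$ for the orthogonal projection of the origin onto $L$ and using $p_0\perp U$, for $x\in U$ one has $\la\alpha_{ij}^k,x\ra=\la\alpha_{ij}^k-p_0,x\ra$ with $\alpha_{ij}^k-p_0\in U$, so $\Phi|_U$ is a positive combination of exponentials of the linear forms $\la\alpha_{ij}^k-p_0,\cdot\ra$. Such a function is coercive on $U$, hence attains its infimum, if and only if $0$ lies in the relative interior of $\mathrm{conv}(\mathbf F)-p_0$, that is, if and only if $p_0$ lies in the interior of $\mathrm{conv}(\mathbf F)$ relative to $L$; and at such a minimizer the gradient condition gives $\sum p_{ij}^k\alpha_{ij}^k=p_0$. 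Combining this with the previous paragraph yields the stated equivalence. The main obstacle I expect is the bookkeeping in the middle step: verifying that under the simple-eigenvalue hypothesis the Ricci operator of a diagonal metric is genuinely diagonal and that its moment-map value matches the torus data, and pinning down that the relevant quotient direction space is exactly $U$, so that the target of the moment map is forced to be the projection $p_0$ rather than some other point of $L$.
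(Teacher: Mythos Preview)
The paper does not prove this theorem: it is quoted verbatim from \cite[Theorem~1]{Nk13} and used as a black box, so there is no ``paper's own proof'' to compare against. The fragment of argument that appears after the bibliography in the source (and after \verb|\end{document}|, hence not compiled) is a proof of Corollary~\ref{ni}, i.e.\ of the equivalence between the convex-hull condition and the existence of a positive solution of $Uv=[1]_N$; it does not address the Einstein--nilradical characterisation itself.

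That said, your sketch is essentially Nikolayevsky's own argument (which in turn rests on Lauret's moment-map/variational picture), and the logical skeleton is correct: simple eigenvalues force the search to diagonal metrics; on that torus slice $m(g_x.\mu)$ is diagonal and equals a convex combination of the weights $\alpha_{ij}^k$; the nilsoliton condition is exactly the vanishing of the $U$-component of this combination, equivalently a critical point of $\Phi|_U$; and coercivity of a positive sum of exponentials $\sum a_\alpha e^{2\la\alpha-p_0,x\ra}$ on $U$ is equivalent to $0\in\operatorname{relint}\operatorname{conv}\{\alpha-p_0\}$. Two places deserve a line more of care than you give them. First, the identification $(\RR\mathbf 1+\dg)^\perp=U$: you assert it from $\mathbf 1^\perp\cap\operatorname{span}\mathbf F=U$ and $\dg=(\operatorname{span}\mathbf F)^\perp$, which is right, but one must also check that the pre-Einstein condition pins down the \emph{point} $p_0$ (not merely the subspace $U$) as the landing spot of $m$, since $\phi$ being pre-Einstein is what makes the projection of the origin, rather than an arbitrary element of $L$, the relevant target. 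Second, the ``only if'' direction of coercivity: if $p_0$ lies on the boundary of $\operatorname{conv}(\mathbf F)$ you need to argue that no critical point exists (not merely that $\Phi|_U$ is unbounded below), which follows because the gradient along a supporting direction never vanishes. With those two points fleshed out, your proposal is a faithful reconstruction of the cited proof.
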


\no If $N$=$\#\mathbf{F}$ and we introduce a matrix $Y \in \RR^{n
\times N}$ whose vector-columns are the vectors $\alpha_{ij}^{k}$ in
some fixed order. Define the vector $[1]_{N}=(1,\ldots,1)^{t} \in
\RR^{N}$ and the matrix $U \in \RR ^{N \times N}$ by $U=Y^{t}Y$. An
equivalent way to write Theorem \ref{nic} is the following

\begin{corollary}\label{ni}
\cite[Corollary 1]{Nk13} A Lie algebra $\ngo$ whose pre-Einstein derivation has
all the eigenvalues simple is an Einstein nilradical if and only
if there exists a vector $v \in \RR^{N}$ all of whose coordinates
are positive such that
\begin{equation}\label{u}
Uv=[1]_{N}.
\end{equation}
\end{corollary}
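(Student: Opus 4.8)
The plan is to obtain Corollary \ref{ni} from Theorem \ref{nic} by translating the geometric condition into the language of the Gram matrix $U$. Write $\mathbf{F}=\{\alpha_1,\ldots,\alpha_N\}$ for the vectors $\alpha_{ij}^{k}$, so that the columns of $Y$ are $\alpha_1,\ldots,\alpha_N$ and $U=Y^tY$ has entries $U_{pq}=(\alpha_p,\alpha_q)$. Since $L$ is the affine hull of $\mathbf{F}$, the ``interior of the convex hull of $\mathbf{F}$'' in Theorem \ref{nic} is the relative interior inside $L$, and I would use the standard fact that this relative interior is exactly the set of points $\sum_p t_p\alpha_p$ with all $t_p>0$ and $\sum_p t_p=1$, together with the characterization of the orthogonal projection $p_0$ of the origin onto $L$ as the unique point of $L$ orthogonal to the direction space $\vec L=\mathrm{span}\{\alpha_p-\alpha_q\}$, i.e. the unique affine combination of the $\alpha_p$ for which $(p_0,\alpha_p)$ is independent of $p$. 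The key preliminary observation is that every $\alpha_{ij}^{k}=f_k-f_i-f_j$ has coordinate sum $-1$, so $L$ is contained in the hyperplane $\{x:\sum_i x_i=-1\}$, which avoids the origin; hence $p_0\neq 0$.

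For the forward implication I would assume $p_0$ lies in the relative interior and write $p_0=\sum_p t_p\alpha_p=Yt$ with $t>0$ and $\sum_p t_p=1$. Orthogonality of $p_0$ to $\vec L$ means $(p_0,\alpha_p)$ equals a constant $\lambda$ independent of $p$, i.e. $Y^tp_0=\lambda[1]_N$; substituting $p_0=Yt$ gives $Ut=\lambda[1]_N$. Pairing $p_0$ with itself yields $\lambda\sum_p t_p=\sum_p t_p(p_0,\alpha_p)=(p_0,p_0)$, so $\lambda=(p_0,p_0)>0$ by the observation above. Setting $v=t/\lambda$ then produces a vector with all coordinates positive and $Uv=[1]_N$.

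For the converse I would start from $v>0$ with $Uv=[1]_N$ and set $w=Yv$. Then $Y^tw=Uv=[1]_N$, so $(\alpha_p,w)=1$ for every $p$, while $s:=\sum_p v_p=[1]_N^tv=v^tUv=(w,w)>0$. The point $p_0:=w/s=\sum_p (v_p/s)\alpha_p$ is a convex combination of the $\alpha_p$ with strictly positive coefficients summing to $1$, hence lies in the relative interior of the convex hull; and since $(p_0,\alpha_p)=1/s$ is independent of $p$, the vector $p_0$ is orthogonal to $\vec L$ and lies in $L$, so it is the orthogonal projection of the origin onto $L$. Theorem \ref{nic} then shows that $\ngo$ is an Einstein nilradical.

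I expect the only genuinely delicate point to be the sign issue in the forward direction: without the remark that $L$ avoids the origin one could only conclude $\lambda\geq 0$, and the degenerate case $\lambda=0$ (origin lying in $L$) would obstruct the passage from $t$ to a strictly positive $v$. The coordinate-sum-$(-1)$ property of the $\alpha_{ij}^{k}$ is exactly what rules this out, and everything else is the routine dictionary between barycentric coordinates, the Gram matrix $U$, and orthogonal projection onto an affine subspace.
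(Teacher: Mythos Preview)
Your proof is correct and follows essentially the same route as the paper's. The paper phrases the forward direction via Lagrange multipliers for the squared-norm function on the affine simplex $\{\sum c_{ij}^k=1\}$, whereas you invoke the equivalent orthogonality characterization of the foot of the perpendicular directly; but the substantive step---using that each $\alpha_{ij}^k$ has coordinate sum $-1$ to force $p_0\neq 0$ and hence $\lambda>0$---is identical in both arguments, and your converse (normalizing $w=Yv$ by $s=v^tUv$) is if anything slightly more explicit than the paper's.
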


\section{Classification of filiform $\NN$-graded Lie algebras of dimension $8$}\label{cla}

In this section, we will study filiform Lie algebras of dimension
$8$ admitting  an $\NN$-{\it gradation}.

If $\ngo$ is a $n$-dimensional nilpotent filiform $\NN$-graded Lie
algebra then its {\it rank}, denoted by $\rgg\ngo$ (i.e. the
dimension of the maximal abelian subalgebra of $\Der(\ngo)$
consisting of real semisimple elements), is at most two. It is known from \cite[Section 3]{GzHkm} that if $\rgg\ngo=2$ then $\ngo$ is
isomorphic to
$$
\begin{array}{lll}
\mg_{0}(n):\qquad & \mu(e_1,e_i)=e_{i+1}, \quad i=2,\ldots,n-1,\\
\\
\mg_{1}(n),\; n \;\mbox{even} : \qquad & \mu(e_1,e_i)=e_{i+1}, \quad i=2,\ldots,n-1, \\
& \mu(e_i,e_{n-i+1})=(-1)^{i}e_{n}, \quad i=2,\ldots,n-1,
\end{array}
$$
(see also \cite{Ver}), and if $\rgg\ngo=1$ then $\ngo$ belongs to
one and only one of the following classes:

\begin{equation}\label{aerre}
\begin{array}{ll}
   A_r,\quad 2 \leq r \leq n-3: & \mu(e_{1},e_{i})=e_{i+1},\quad i=2,\ldots,n-1,\\
    & \mu(e_{i},e_{j})=c_{ij} e_{i+j+r-2},\quad i,j\geq 2, i+j+r-2\leq
    n,
\end{array}
\end{equation}

and

\begin{equation}\label{berre}
\begin{array}{ll}
    B_r,\quad 2 \leq r \leq n-4: & \mu(e_{1},e_{i})=e_{i+1},\quad i=2,\ldots,n-2, \\
    & \mu(e_{i},e_{j})=c_{ij} e_{i+j+r-2},\quad i,j\geq 2, i+j+r-2\leq
    n-1,\\
    & \mu(e_i,e_{n+1-i})=(-1)^{i+1}e_n,
\end{array}
\end{equation}
with at least one $c_{ij}\neq 0$.

The rank can not be zero because in that case it would not admit an
$\NN$-gradation.

For a given dimension, to classify Lie algebras in the classes
$A_{r}$ and $B_{r}$ is involved. The numbers $c_{ij}$'s must satisfy
the Jacobi condition, and even after finding those which do, it is
difficult to obtain a complete list of these algebras up to
isomorphism. The main difficult comes from the fact that most common
invariants from Lie theory usually coincide for all algebras in one
of these families, specially in a curve.

Observe that two algebras which lie in different classes can not be
isomorphic, because if they were so they would have the same
pre-Einstein derivation (recall that when the rank is $1$ the
semisimple derivation is unique up to conjugation and scaling) and
that is absurd because the eigenvalues of this derivation for
$A_{r}$ are proportional to $(1,r,r+1,\ldots,n+r-2)$ and for $B_{r}$
are proportional to $(1,r,r+1,\ldots,n+r-3,n+2r-3)$ (see
\cite{Nk13}).

In this section we will study Lie algebras of the classes $A_{r}$
and $B_{r}$ for $n=8$ and then will show in Table \ref{ta} a
complete list up to isomorphism.

\begin{lemma}\label{lpados}
For $t \in \RR$, let $(\RR^{8},\mu_{t})$ be the Lie algebra
defined by
$$
\begin{array}{lll}
\mu_{t}(e_{1},e_{j})=e_{j+1}, \quad j=2,\ldots,7, &&
\mu_{t}(e_{2},e_{3})=te_{5},\\
\mu_{t}(e_{2},e_{4})=te_{6}, && \mu_{t}(e_{2},e_{5})=(t-1)e_{7},\\
\mu_{t}(e_{2},e_{6})=(t-2)e_{8} &&  \mu_{t}(e_{3},e_{4})=e_{7}\\
\mu_{t}(e_{3},e_{5})=e_{8}. && \\
\end{array}
$$
Then $\ngo_{t}$ is isomorphic to $\ngo_{s}$ if and only if $t=s$.
\end{lemma}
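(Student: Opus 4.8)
The plan is to establish both directions of the biconditional. The "if" direction is trivial: if $t=s$ then $\mu_t=\mu_s$ and the identity map is an isomorphism. The content is in the "only if" direction, for which I would prove the contrapositive-free statement that any Lie algebra isomorphism $\vp\colon\ngo_t\to\ngo_s$ forces $t=s$. The key observation is that each $\mu_t$ belongs to the family $A_2$ (with $n=8$, so $r=2$, hence $i+j+r-2=i+j$ and the brackets $\mu(e_i,e_j)=c_{ij}e_{i+j}$ for $i,j\geq 2$), and the scalar $t$ is encoded in the structure constants $c_{ij}$. So the task reduces to showing that the isomorphism type pins down $t$.

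**First I would** record the structural features shared by all $\ngo_t$. Since $\ngo_t$ is filiform of dimension $8$, the descending central series is $C_0=\ngo_t\supsetneq C_1=\langle e_3,\ldots,e_8\rangle\supsetneq\cdots$, with $C_i=\langle e_{i+2},\ldots,e_8\rangle$ for $1\le i\le 6$; in particular $e_1,e_2$ span a complement of $C_1$, and any isomorphism must respect this filtration. The element $e_1$ is \emph{characteristic}: $\ad_{e_1}$ has maximal rank ($=6$) among the $\ad_x$, whereas $\ad_{e_2}$ has strictly smaller rank (for generic $t$ its image is spanned by $e_5,e_6,(t-1)e_7,(t-2)e_8$, hence rank $\leq 4$). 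Using Lemma~\ref{le} with the invariants $\I_{s,j}$, I would argue that any isomorphism $\vp$ must send $e_1$ to an element of the form $\vp(e_1)=a e_1 + (\text{terms in }C_1)$ with $a\neq 0$, and $\vp(e_2)$ to an element $b e_2 + c e_1 + (\text{terms in }C_1)$; careful rank-counting should in fact force $c=0$, i.e. $\vp(e_1)\in\RR^\times e_1 + C_1$ and $\vp(e_2)\in\RR^\times e_2 + C_1$, because $e_1$ is the unique (up to the $C_1$-coset and scaling) element of maximal $\ad$-rank.

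**Then** I would propagate these two choices through the bracket relations. Writing $\vp(e_1)=ae_1+\cdots$ and $\vp(e_2)=be_2+\cdots$, the relations $\mu(e_1,e_j)=e_{j+1}$ determine $\vp(e_{j+1})$ inductively from $\vp(e_1)$ and $\vp(e_j)$ up to the action of $a$ and lower-order corrections, yielding $\vp(e_k)\equiv a^{k-2}b\, e_k \pmod{C_{k-1}}$ for $k\geq 2$ (and $\vp(e_2)\equiv b e_2$). Now I compare the remaining structure constants. Applying $\vp$ to $\mu_t(e_2,e_3)=t e_5$ and using that $\vp$ is a homomorphism gives, at leading order in the grading, a relation of the shape $s\cdot(\text{leading coeff of }\vp(e_5)) = (\text{leading coeffs of }\vp(e_2),\vp(e_3))\cdot t$. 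Substituting the leading coefficients computed above, the factors of $a$ and $b$ should cancel against those coming from the normalization of $\vp(e_5)$, leaving the clean identity $s=t$. I would also cross-check with $\mu_t(e_2,e_4)=t e_6$ to confirm consistency and to pin down any remaining freedom.

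**The hard part will be** controlling the lower-order ("off-diagonal") terms in $\vp(e_1)$ and $\vp(e_2)$ rigorously, so that one is certain the comparison of structure constants is not spoiled by corrections landing in deeper pieces of the central series. In a filiform algebra these corrections can feed back into the relations at non-leading order, so the argument must be organized by the $\NN$-grading: one verifies that each bracket relation, read in the associated graded algebra, already forces $s=t$, and that this leading-order conclusion is unaffected by the filtration terms. Equivalently, since each $\ngo_t$ is $\NN$-graded with the grading induced by the pre-Einstein derivation (eigenvalues proportional to $(1,2,3,\ldots,7)$, the $A_2$ pattern), I would pass to graded isomorphisms and reduce the whole comparison to a finite, weight-by-weight matching of coefficients — at which point the equality $t=s$ falls out of the single relation coming from $\mu(e_2,e_3)=te_5$.
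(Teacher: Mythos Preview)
Your overall strategy matches the paper's: use that $\vp$ preserves the descending central series to put its matrix in lower-triangular form, kill the $e_1$-component of $\vp(e_2)$ by a rank argument, and then compare structure constants at leading order in the filtration. That is exactly what the paper does.

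However, there is a genuine gap in your execution. Write $\vp(e_1)\equiv a\,e_1$ and $\vp(e_2)\equiv b\,e_2$ modulo $C_1$, so that $\vp(e_k)\equiv a^{k-2}b\,e_k$ modulo $C_{k-1}$ for $k\geq 2$, as you say. Applying $\vp$ to $\mu_t(e_2,e_3)=t\,e_5$ and reading off the leading $e_5$-coefficient gives
\[
t\cdot a^{3}b \;=\; s\cdot (b)(ab) \;=\; s\,ab^{2},\qquad\text{i.e.}\quad t\,a^{2}=s\,b.
\]
The factors of $a$ and $b$ do \emph{not} cancel: this single relation does not force $t=s$, because the pair $(a,b)$ is not yet pinned down. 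Your proposed cross-check with $\mu_t(e_2,e_4)=t\,e_6$ yields exactly the same equation $t\,a^2=s\,b$ and adds nothing.

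What is needed is a second bracket whose coefficient depends differently on $t$. The paper uses $\mu_t(e_2,e_5)=(t-1)\,e_7$, which at leading order gives $(t-1)\,a^2=(s-1)\,b$. Subtracting this from $t\,a^2=s\,b$ yields $a^2=b$, and substituting back gives $t=s$. (Equivalently, one could use $\mu_t(e_3,e_4)=e_7$, whose leading-order comparison gives $a^2=b$ directly.) So the missing ingredient in your plan is precisely this second, independent relation; once you include it, the argument is complete and essentially identical to the paper's.
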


\begin{proof}
If $\ngo_{t} \simeq \ngo_{s}$ then there exists $\vp: \ngo_{t}
\longrightarrow \ngo_{s}$ isomorphism of Lie algebras, so
$\vp<e_j,\ldots,e_8>=<e_j,\ldots,e_8>$, $j=3,\ldots,8$, since
$$\vp
(\mu_{t}(\ngo_{t},\mu_{t}(\ngo_{t},\ldots,\mu_{t}(\ngo_{t},\ngo_{t})\ldots)))
=
\mu_{s}(\ngo_{s},\mu_{s}(\ngo_{s},\ldots,\mu_{s}(\ngo_{s},\ngo_{s})\ldots)).$$
Therefore the matrix of $\vp$ respect to the basis
$\{e_{1},\ldots,e_{8}\}$ is given by
$$
\vp=\left[%
\begin{smallmatrix}
  a_{1} & b_{1} &  &  &  &  &  &  \\
  a_{2} & b_{2} &  &  &  &  &  &  \\
  a_{3} & b_{3} & c_{3} &  &  &  &  &  \\
  a_{4} & b_{4} & c_{4} & d_{4}&  &  &  &  \\
  a_{5} & b_{5} & c_{5} & d_{5} & l_{5} &  &  &  \\
  a_{6} & b_{6} & c_{6} & d_{6} & l_{6} & f_{6} &  &  \\
  a_{7} & b_{7} & c_{7} & d_{7} & l_{7} & f_{7} & g_{7} &  \\
  a_{8} & b_{8} & c_{8} & d_{8} & l_{8} & f_{8} & g_{8} & h_{8} \\
\end{smallmatrix}%
\right].
$$
It is easy to see that $b_{1}=0$ by using that $\rgg
(\ad_{\vp(e_2)})=\rgg (\ad_{e_2})$.

Since $\vp$ is a morphism of Lie algebras,
$\vp(\mu_{t}(e_{i},e_{j}))=\mu_{s}(\vp(e_{i}),\vp(e_{j}))$ for all
$i,j$ and therefore:

$$
\begin{array}{llll}
c_{3}=a_{1}b_{2}, & d_{4}=a_{1}c_{3}, &
l_{5}=a_{1}d_{4}, & f_{6}=a_{1}l_{5}, \\
g_{7}=a_{1}f_{6}, & h_{8}=a_{1}g_{7}, &
tl_{5}=sb_{2}c_{3}, & (t-1)g_{7}=(s-1)b_{2}l_{5}.\\
\end{array}
$$

\no Then we get that $ta_{1}^{2}=sb_{2}$ and
$(t-1)a_{1}^{2}=(s-1)b_{2}$ and so $ta_{1}^{2} - a_{1}^{2}=sb_{2}
- b_{2}$. Finally, $a_{1}^{2}=b_{2}$ and therefore $t=s$.
\end{proof}

\begin{lemma}\label{lados}
Let $\ngo$ be a Lie algebra of dimension $8$ of class $A_{2}$.
Then $\ngo$ is isomorphic to one and only one of the following
algebras:
$$
\begin{array}{lll}
\mg_{2}(8)\qquad & \mu(e_1,e_i)=e_{i+1}, \quad i=2,\ldots,7,\\
 & \mu(e_2,e_i)=e_{i+2}, \quad i=3,\ldots,6.\\ \\
\ggo_{\alpha}(8), \alpha \in \RR \qquad &  \mu(e_1,e_i)=e_{i+1}, \quad i=2,\ldots,7,\\
&\mu(e_2,e_3)=(2+ \alpha)e_{5}, \quad \mu(e_2,e_4)=(2+\alpha)e_{6},\\
& \mu(e_2,e_5)=(1+\alpha)e_{7}, \quad \mu(e_3,e_4)=e_{7},\\
& \mu(e_3,e_5)=e_{8}, \quad \mu(e_2,e_6)=\alpha e_{8}.
\end{array}
$$
\end{lemma}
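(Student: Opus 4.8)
The plan is to begin from the general shape of a class $A_2$ algebra and cut down the free structure constants with the Jacobi identity. By (\ref{aerre}) with $n=8$ and $r=2$, such a $\ngo$ has $\mu(e_1,e_i)=e_{i+1}$ for $i=2,\ldots,7$ together with $\mu(e_i,e_j)=c_{ij}e_{i+j}$ for $2\le i<j$ and $i+j\le 8$; the only possibly nonzero constants are therefore $c_{23},c_{24},c_{25},c_{26},c_{34},c_{35}$. The Jacobi identity on a triple $(e_1,e_i,e_j)$ says exactly that $\ad_{e_1}$ is a derivation, and since $\ad_{e_1}e_k=e_{k+1}$ this yields, for each pair with $i+j\le 7$, the linear relation $c_{ij}=c_{i+1,j}+c_{i,j+1}$ (with $c_{kk}=0$). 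Writing these out gives $c_{34}=c_{35}$, $c_{25}=c_{35}+c_{26}$, $c_{24}=c_{34}+c_{25}$ and $c_{23}=c_{24}$.

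I would then observe that the remaining Jacobi identities, those on triples $(e_i,e_j,e_k)$ with all indices $\ge 2$, are automatically satisfied because every bracket they produce lands in degree $\ge 9$ and hence vanishes. Thus the relations above are the only constraints, and solving them leaves a two–parameter family, which I parametrize by $s:=c_{34}=c_{35}$ and $p:=c_{26}$, so that $c_{25}=s+p$ and $c_{23}=c_{24}=2s+p$. To normalize I would use the diagonal change of basis $e_1\mapsto a e_1$, $e_i\mapsto a^{i-2}b\,e_i$ for $i\ge 2$: this preserves the relations $\mu(e_1,e_i)=e_{i+1}$ and multiplies every $c_{ij}$ by the common factor $a^2/b$, which ranges over all of $\RR\setminus\{0\}$. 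Hence $(s,p)$ may be rescaled by an arbitrary nonzero scalar. If $s=0$ then nontriviality forces $p\ne 0$, and rescaling to $p=1$ gives exactly $\mg_2(8)$; if $s\ne 0$, rescaling to $s=1$ gives the algebra with $c_{26}=p/s$, which is precisely $\ggo_{p/s}(8)$.

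It remains to see that the list has no repetitions. For $\mg_2(8)$ the characteristic ideal $[\ngo,\ngo]=\langle e_3,\ldots,e_8\rangle$ is abelian, whereas in every $\ggo_\alpha(8)$ one has $\mu(e_3,e_4)=e_7\ne 0$, so $[[\ngo,\ngo],[\ngo,\ngo]]\ne 0$; this isomorphism invariant separates $\mg_2(8)$ from all the $\ggo_\alpha(8)$ (alternatively one can separate them with the invariants $\I_{s,j}$ of Lemma \ref{le}). For the remaining family, matching structure constants identifies $\ggo_\alpha(8)$ with the algebra $\mu_{\alpha+2}$ of Lemma \ref{lpados}, so $\ggo_\alpha(8)\cong\ggo_{\alpha'}(8)$ already forces $\alpha=\alpha'$.

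The main obstacle I expect is the bookkeeping in the first two steps: enumerating the Jacobi relations correctly and verifying that the diagonal rescaling realizes an arbitrary common scaling of the $c_{ij}$, so that the two normal forms genuinely exhaust the family and no case is lost. Once the clean two–parameter description $(s,p)$ is in hand, the split into $s=0$ and $s\ne 0$ and the appeal to Lemma \ref{lpados} are routine.
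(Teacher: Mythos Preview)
Your argument is correct and follows essentially the same route as the paper: the Jacobi relations reduce the $A_2$ family to two free parameters, a diagonal change of basis rescales all $c_{ij}$ by a common nonzero factor, the dichotomy $c_{34}=0$ versus $c_{34}\ne 0$ produces $\mg_2(8)$ and the curve $\ggo_\alpha(8)=\mu_{\alpha+2}$, and Lemma~\ref{lpados} then separates the members of the curve. The one substantive difference is how you separate $\mg_2(8)$ from the $\ggo_\alpha(8)$: the paper writes out $\ad_x$ for a generic $x$ and invokes the rank invariant of Lemma~\ref{le}, while you use the simpler observation that $[[\ngo,\ngo],[\ngo,\ngo]]=0$ for $\mg_2(8)$ but contains $e_7$ for every $\ggo_\alpha(8)$; your invariant is more direct and avoids the matrix computation. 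A cosmetic remark: with your change of basis $f_1=ae_1$, $f_i=a^{i-2}be_i$ the new constants are $(b/a^2)c_{ij}$ rather than $(a^2/b)c_{ij}$, but since this factor still sweeps out $\RR\setminus\{0\}$ the normalization step is unaffected.
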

\begin{proof}
From the Jacobi identity and (\ref{aerre}) follows that
$c_{23}=c_{24}$, $c_{25}=c_{24}-c_{34}$, $c_{26}=c_{25}-c_{35}$ y
$c_{34}=c_{35}$. Therefore, the Lie algebras of class $A_{2}$ are:
$$
\begin{array}{ll}
\mu_{a,b}(e_{1},e_{j})=e_{j+1} \qquad j=2,\ldots,7, & \mu_{a,b}(e_{2},e_{3})=ae_{5},\\
\mu_{a,b}(e_{2},e_{4})=ae_{6}, & \mu_{a,b}(e_{2},e_{5})=(a-b)e_{7},\\
\mu_{a,b}(e_{2},e_{6})=(a-2b)e_{8}, &
\mu_{a,b}(e_{3},e_{4})=be_{7}, \\
\mu_{a,b}(e_{3},e_{5})=be_{8}, & \\
\end{array}
$$ with $a,b \in \RR$, some non zero.

If $b=0$, we get $\mu_{a,0} \simeq \mu_{1,0}$ since
$\mu_{1,0}=g_{a}.\mu_{a,0}$ by taking $g_{a} \in \Go$,
$g_{a}=\di(1,a,a,a,a,a,a,a)$, where $\di(a,b,c,d,e,f,g,h)$ denotes
the diagonal matrix with entries $a,b,c,\ldots,h$. If $b \neq 0$
then $\mu_{a,b} \simeq \mu_{\frac ab ,1}$ since $\mu_{\frac
ab,1}=g_{b}.\mu_{a,b}$ for $g_{b} \in \Go$,
$g_{b}=\di(1,b,\ldots,b)$. Let $t=\frac ab$ be, then
$\mu_{t,1}=\mu_{t}$, where $\mu_{t}$ is the bracket of Lemma
\ref{lpados} and so we know that $\mu_{t} \simeq \mu_{s}$ if and
only if $t=s$.

Let us now see that there does not exist $t \in \RR$ such that
$\mu_{t,1}$ is isomorphic to $\mu_{1,0}$.

Let $x=c_{1}e_{1}+c_{2}e_{2}+c_{3}e_{3}$ be. The matrices of
$\ad_{x}$ relative to $\mu_{t,1}$ and $\mu_{1,0}$ in terms of the
basis $\{e_{1},\ldots,e_{8}\}$ are respectively given by
$$
\ad_{x}=\left[%
\begin{smallmatrix}
  0 & 0 & 0 & 0 & 0 & 0 & 0 & 0 \\
  0 & 0 & 0 & 0 & 0 & 0 & 0 & 0 \\
  -c_{2} & c_{1} & 0 & 0 & 0 & 0 & 0 & 0 \\
  -c_{3} & 0 & c_{1} & 0 & 0 & 0 & 0 & 0 \\
  0 & -tc_{3} & tc_{2} & c_{1} & 0 & 0 & 0 & 0 \\
  0 & 0 & 0 & tc_{2} & c_{1} & 0 & 0 & 0 \\
  0 & 0 & 0 & c_{3} & (t-1)c_{2} & c_{1} & 0 & 0 \\
  0 & 0 & 0 & 0 & c_{3} & (t-2)c_{2} & c_{1} & 0 \\
\end{smallmatrix}%
\right], \qquad
\ad_{x}=\left[%
\begin{smallmatrix}
  0 & 0 & 0 & 0 & 0 & 0 & 0 & 0 \\
  0 & 0 & 0 & 0 & 0 & 0 & 0 & 0 \\
  -c_{2} & c_{1} & 0 & 0 & 0 & 0 & 0 & 0 \\
  -c_{3} & 0 & c_{1} & 0 & 0 & 0 & 0 & 0 \\
  0 & -c_{3} & c_{2} & c_{1} & 0 & 0 & 0 & 0 \\
  0 & 0 & 0 & c_{2} & c_{1} & 0 & 0 & 0 \\
  0 & 0 & 0 & 0 & c_{2} & c_{1} & 0 & 0 \\
  0 & 0 & 0 & 0 & 0 & c_{2} & c_{1} & 0 \\
\end{smallmatrix}%
\right].
$$

Then, by Lemma \ref{le}, $\mu_{t,1}$ it is not isomorphic to
$\mu_{1,0}$ because the first matrix never has rank $2$ while the
second matrix does ($c_{1}=0, c_{2}=0, c_{3} \neq 0$).
\end{proof}

We will omit the proof of the following two lemmas because they are similar to those for Lemmas \ref{lpados} and
\ref{lados}.

\begin{lemma}\label{lpatres}
For $t \in \RR$, let $(\RR^{8}, \mu_{t})$ be the Lie algebra
defined by:
$$
\begin{array}{lll}
\mu_{t}(e_{1},e_{j})=e_{j+1}, \quad j=2,\ldots,7, &&
\mu_{t}(e_{2},e_{3})=(t+1)e_{6},\\
\mu_{t}(e_{2},e_{4})=(t+1)e_{7}, && \mu_{t}(e_{2},e_{5})=te_{8},\\
\mu_{t}(e_{3},e_{4})=e_{8}. && \\
\end{array}
$$ Then $\ngo_{t}$ is isomorphic to $\ngo_{s}$ if and only if $t=s$.
\end{lemma}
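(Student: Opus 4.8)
The plan is to imitate the proof of Lemma~\ref{lpados} step by step. Suppose $\vp:\ngo_t\longrightarrow\ngo_s$ is an isomorphism of Lie algebras. Since both algebras are filiform of dimension $8$, their descending central series give the flags $\langle e_j,\ldots,e_8\rangle$, $j=3,\ldots,8$, so Lemma~\ref{le} forces $\vp\langle e_j,\ldots,e_8\rangle=\langle e_j,\ldots,e_8\rangle$ for $j=3,\ldots,8$. Hence the matrix of $\vp$ in the basis $\{e_1,\ldots,e_8\}$ has the same block lower triangular shape (full first two columns, column $k\geq 3$ supported on rows $\geq k$) as in Lemma~\ref{lpados}, and I keep the entry names $a_i,b_i,c_3,d_4,l_5,f_6,g_7,h_8$ used there. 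As in that lemma, I would next check $b_1=0$ from $\rgg(\ad_{\vp(e_2)})=\rgg(\ad_{e_2})$: in $\ngo_t$ the image of $\ad_{e_2}$ is contained in $\langle e_3,e_6,e_7,e_8\rangle$, so $\rgg(\ad_{e_2})\leq 4$ for every value of $t$, whereas a nonzero $e_1$-component of $\vp(e_2)$ would make the leading terms of $\ad_{\vp(e_2)}(e_2),\ldots,\ad_{\vp(e_2)}(e_7)$ equal to $e_3,\ldots,e_8$, giving rank at least $6$ --- a contradiction.

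Once $b_1=0$, the relations $\mu_t(e_1,e_j)=e_{j+1}$ propagate the diagonal entries exactly as before: comparing the lowest-order coefficient in $\vp(e_{j+1})=\mu_s(\vp(e_1),\vp(e_j))$ yields $c_3=a_1b_2$ and then $d_4=a_1c_3$, $l_5=a_1d_4$, $f_6=a_1l_5$, $g_7=a_1f_6$, $h_8=a_1g_7$, so the $e_k$-coefficient of $\vp(e_k)$ is $a_1^{\,k-2}b_2$ for $k\geq 3$; invertibility of $\vp$ then gives $a_1\neq 0$ and $b_2\neq 0$. I then read off two scalar identities from the twisted brackets. Matching the $e_6$-coefficient of $(t+1)\vp(e_6)=\mu_s(\vp(e_2),\vp(e_3))$, whose right-hand side has lowest term $(s+1)b_2c_3\,e_6$, gives $(t+1)a_1^4b_2=(s+1)a_1b_2^2$, i.e. $(t+1)a_1^3=(s+1)b_2$; matching the $e_8$-coefficient of $t\,\vp(e_8)=\mu_s(\vp(e_2),\vp(e_5))$ gives $ta_1^6b_2=sa_1^3b_2^2$, i.e. $ta_1^3=sb_2$. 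Subtracting the second identity from the first produces $a_1^3=b_2$, whence $ta_1^3=sa_1^3$ and therefore $t=s$. The converse implication is trivial.

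The computation is routine once the shape of $\vp$ is fixed, so I expect the only delicate step to be the vanishing of $b_1$. The subtlety is that $\rgg(\ad_{e_2})$ itself varies with $t$ (it equals $4$, $3$ or $2$ according as $t\notin\{0,-1\}$, $t=0$ or $t=-1$), so the argument must rely on the \emph{uniform} bound $\rgg(\ad_{e_2})\leq 4$, which holds because $\Im(\ad_{e_2})$ always lies in the fixed space $\langle e_3,e_6,e_7,e_8\rangle$; this suffices to contradict the rank $\geq 6$ produced by a surviving $e_1$-term. I would also note that the lowest-order coefficient comparisons in the second paragraph are clean precisely because $\vp(e_2),\vp(e_3)$ and $\vp(e_5)$ carry no $e_1$-component, so no bracket landing below the target basis vector can contribute on the right-hand sides.
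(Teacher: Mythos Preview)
Your proof is correct and follows exactly the route the paper intends: the authors omit the proof of Lemma~\ref{lpatres} precisely because it is the $A_3$-analogue of the $A_2$ argument in Lemma~\ref{lpados}, and your write-up carries out that analogy faithfully (the only cosmetic change being that the diagonal relation becomes $a_1^{3}=b_2$ instead of $a_1^{2}=b_2$, reflecting the extra shift in the brackets). Your handling of the $b_1=0$ step via the uniform bound $\rgg(\ad_{e_2})\le 4$ is the same idea used there and is sound for all $t$.
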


\begin{lemma}\label{latres}
If $\ngo$ is a Lie algebra of class $A_{3}$ and dimension $8$ then
$\ngo$ is isomorphic to one and only one from the next algebras:
$$
\begin{array}{lll}
\ag_{t}(8), t \in \RR & \mu(e_1,e_i)=e_{i+1}, \quad i=2,\ldots,7, & \mu(e_{2},e_{3})=(t+1)e_{6},\\
                      & \mu(e_{2},e_{4})=(t+1)e_{7}, & \mu(e_{2},e_{5})=te_{8}, \\
                      & \mu(e_{3},e_{4})=e_{8}. & \\ \\

\cg_{1,0}(8) &  \mu(e_1,e_i)=e_{i+1}, \quad i=2,\ldots,7, & \mu(e_2,e_3)=e_{6},\\
             & \mu(e_2,e_4)=e_{7}, &  \mu(e_2,e_5)=e_{8}.\\
\end{array}
$$
\end{lemma}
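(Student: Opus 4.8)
The plan is to follow the strategy of Lemma \ref{lados}: first use the Jacobi identity to cut the defining constants of class $A_3$ down to a two-parameter family, then normalize by a diagonal change of basis, and finally separate the resulting normal forms by isomorphism invariants. For $n=8$ and $r=3$ the only products $\mu(e_i,e_j)$ with $i,j\geq 2$ that can be nonzero are $\mu(e_2,e_3)=c_{23}e_6$, $\mu(e_2,e_4)=c_{24}e_7$, $\mu(e_2,e_5)=c_{25}e_8$ and $\mu(e_3,e_4)=c_{34}e_8$, by (\ref{aerre}). Applying the derivation $\ad_{e_1}$ to these brackets (equivalently, imposing Jacobi on the triples containing $e_1$) yields $c_{23}=c_{24}$ and $c_{24}=c_{34}+c_{25}$, while the remaining triples are automatically satisfied. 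Writing $a=c_{23}=c_{24}$ and $b=c_{34}$, every class-$A_3$ algebra is some $\mu_{a,b}$ with $\mu_{a,b}(e_2,e_5)=(a-b)e_8$ and $(a,b)\neq(0,0)$.

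Next I would normalize. A diagonal $g=\di(1,q,\ldots,q)\in\Go$ preserves the shape $\mu(e_1,e_i)=e_{i+1}$ and multiplies each of the four remaining constants by the common factor $q^{-1}$, so $g.\mu_{a,b}=\mu_{a/q,\,b/q}$. Hence if $b\neq 0$, taking $q=b$ gives $\mu_{a,b}\simeq\mu_{a/b,1}$, which is exactly $\ag_t(8)$ with $t=\tfrac{a}{b}-1$; and if $b=0$ (so $a\neq0$), taking $q=a$ gives $\mu_{a,0}\simeq\cg_{1,0}(8)$. Thus the two displayed families exhaust class $A_3$. That $\ag_t(8)\simeq\ag_s(8)$ forces $t=s$ is Lemma \ref{lpatres}, and among the algebras with $b=0$ there is only $\cg_{1,0}(8)$.

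It remains to show $\ag_t(8)\not\simeq\cg_{1,0}(8)$ for every $t$, and this is the delicate step. A direct rank comparison of $\ad_x$ in the style of Lemma \ref{lados} does not separate the families uniformly in $t$: for $t=-1$ the element $e_3$ satisfies $\dim\Im\ad_{e_3}=2$ in both $\ag_{-1}(8)$ and $\cg_{1,0}(8)$, so the mere occurrence of a rank-two adjoint operator is not a distinguishing invariant here, and the rank criterion of Lemma \ref{le} would have to be read on a suitable quotient rather than on all of $\ngo$. Instead I would use the characteristic ideal $C_1(\ngo)=[\ngo,\ngo]=\langle e_3,\dots,e_8\rangle$, which is the same subspace for both algebras. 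In $\cg_{1,0}(8)$ no bracket has both factors in $C_1(\ngo)$, so $C_1(\ngo)$ is abelian; in $\ag_t(8)$ one has $\mu(e_3,e_4)=e_8\neq0$, so $[C_1(\ngo),C_1(\ngo)]=\langle e_8\rangle\neq0$. Since any isomorphism $\ngo\to\ngo'$ carries $C_1(\ngo)$ onto $C_1(\ngo')$, this single invariant separates the two families for all $t\in\RR$ at once, which completes the classification and the proof of non-redundancy.
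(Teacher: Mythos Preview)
Your proof is correct. The paper does not spell out the argument for Lemma \ref{latres}, saying only that it is ``similar to those for Lemmas \ref{lpados} and \ref{lados}'', and your reduction via Jacobi to the two-parameter family $\mu_{a,b}$ followed by the diagonal rescaling $g=\di(1,q,\dots,q)$ is exactly the template of Lemma \ref{lados}; the appeal to Lemma \ref{lpatres} for pairwise non-isomorphism within the curve $\ag_t(8)$ is also what the paper intends.

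The one place where you depart from the paper's $A_2$ template is in separating $\ag_t(8)$ from $\cg_{1,0}(8)$. In Lemma \ref{lados} the paper uses the rank invariant of Lemma \ref{le}, exhibiting an $x$ with $\ad_x$ of rank $2$ in $\mu_{1,0}$ but not in $\mu_{t,1}$. You correctly note that the analogous rank count does not distinguish $\ag_{-1}(8)$ from $\cg_{1,0}(8)$ on the nose (indeed elements such as $e_5$ have $\dim\Im\ad_{e_5}=2$ in both families for generic $t$), so the Lemma \ref{le} route would require passing to a suitable quotient. Your replacement invariant---whether the derived ideal $C_1(\ngo)=\langle e_3,\dots,e_8\rangle$ is abelian---is cleaner: it is manifestly an isomorphism invariant, vanishes in $\cg_{1,0}(8)$ because every nonzero bracket there has $e_1$ or $e_2$ as a factor, and is nonzero in $\ag_t(8)$ since $[e_3,e_4]=e_8$ independently of $t$. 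This handles all $t$ at once, which the straight rank comparison does not.
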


\begin{lemma}\label{lacua}
Up to isomorphism there is only one Lie algebra of class $A_{4}$
and dimension $8$, defined by:
$$
\begin{array}{ll}
\dg_{1}(8) & \mu(e_1,e_i)=e_{i+1}, \quad i=2,\ldots,7,
                  \quad \mu(e_{2},e_{3})=e_{7}, \quad
                  \mu(e_{2},e_{4})=e_{8}.
\end{array}
$$
\end{lemma}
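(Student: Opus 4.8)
The plan is to first pin down the general shape of a class $A_4$ algebra of dimension $8$ from the defining data (\ref{aerre}), then reduce the free parameters via the Jacobi identity, and finally exhibit an explicit diagonal change of basis identifying every such algebra with $\dg_1(8)$.

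First I would specialize (\ref{aerre}) to $r=4$, $n=8$. The constraints $i,j\geq 2$ and $i+j+r-2=i+j+2\leq 8$ (i.e. $i+j\leq 6$) leave only the two index pairs $(i,j)=(2,3)$ and $(2,4)$, with targets $e_{i+j+2}=e_7$ and $e_8$ respectively. Hence the most general bracket of class $A_4$ in dimension $8$ is
$$
\mu(e_1,e_i)=e_{i+1}\ (i=2,\ldots,7),\qquad \mu(e_2,e_3)=c_{23}e_7,\qquad \mu(e_2,e_4)=c_{24}e_8,
$$
with at least one of $c_{23},c_{24}$ nonzero.

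Next I would impose the Jacobi identity. Evaluating it on the triple $(e_1,e_2,e_3)$ yields $\mu(e_1,c_{23}e_7)-\mu(e_2,e_4)=(c_{23}-c_{24})e_8=0$, so $c_{23}=c_{24}$; every remaining triple produces brackets whose target index exceeds $8$ (or reduces to $\mu(e_k,e_k)=0$), so those identities hold trivially and no further relation appears. Writing $c:=c_{23}=c_{24}$, the nondegeneracy requirement forces $c\neq 0$, and we are left with a one-parameter family $\mu_c$.

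Finally I would show $\mu_c\simeq\dg_1(8)=\mu_1$ for every $c\neq 0$ via the diagonal scaling $g_c=\di(1,c,\ldots,c)\in\Go$. A direct check using $g.\mu(X,Y)=g\mu(g^{-1}X,g^{-1}Y)$ shows that $g_c.\mu_c$ preserves $\mu(e_1,e_i)=e_{i+1}$ and multiplies the coefficients $c_{23},c_{24}$ by $c^{-1}$, so that $g_c.\mu_c=\mu_1$; this establishes that every class $A_4$ algebra is isomorphic to $\dg_1(8)$. Since the admissible brackets are so few, I do not expect a genuine obstacle here: the only points requiring care are the correct enumeration of admissible index pairs and the verification that no additional Jacobi triple imposes a further, possibly contradictory, relation.
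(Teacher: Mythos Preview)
Your argument is correct and matches the paper's proof essentially line by line: enumerate the admissible $c_{ij}$'s from (\ref{aerre}) with $r=4$, use Jacobi on $(e_1,e_2,e_3)$ to force $c_{23}=c_{24}=:a\neq 0$, and then rescale diagonally to reach $\mu_1=\dg_1(8)$. The only cosmetic difference is the direction of the rescaling (the paper writes $\mu_a=g_a.\mu_1$ with $g_a=\di(1,\tfrac1a,\ldots,\tfrac1a)$, whereas you write $g_c.\mu_c=\mu_1$ with $g_c=\di(1,c,\ldots,c)$), which amounts to the same isomorphism.
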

\begin{proof}
From the Jacobi identity and (\ref{aerre}) follows that
$c_{23}=c_{24}$, and therefore:
$$
\begin{array}{lll}
\mu_{a}(e_{1},e_{j})=e_{j+1} \qquad j=2,\ldots,7, &
\mu_{a}(e_{2},e_{3})=a e_{7} & \mu_{a}(e_{2},e_{4})=a e_{8}.
\end{array}
$$ with $a \neq 0$.
Since $a \neq 0$, $\mu_{a} \simeq \mu_{1}$ for $g_{a}=\di(1,\frac
1a,\ldots,\frac 1a)$ because $\mu_{a}=g_{a}.\mu_{1}$.
\end{proof}

\begin{lemma}\label{lacin}
If $\ngo$ is a Lie algebra of class $A_{5}$ and dimension $8$, then
$\ngo$ is isomorphic to
$$
\begin{array}{lll}
\hg_{1}(8) & \mu(e_1,e_i)=e_{i+1}, \quad i=2,\ldots,7, &
\mu(e_{2},e_{3})=e_{8}.
\end{array}
$$
\end{lemma}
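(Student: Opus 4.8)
The statement to prove (Lemma \ref{lacin}) asserts that there is, up to isomorphism, a \emph{unique} Lie algebra of class $A_5$ and dimension $8$, namely $\hg_1(8)$. Looking at the pattern established in the previous lemmas (Lemmas \ref{lados}, \ref{latres}, \ref{lacua}), the plan is to repeat the same two-phase strategy: first use the Jacobi identity together with the general form \eqref{aerre} to pin down all admissible structure constants $c_{ij}$, and then use a diagonal change of basis to normalize the surviving free parameter, concluding that all such algebras coincide up to isomorphism.

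First I would write out the defining relations of class $A_5$ for $n=8$ explicitly. From \eqref{aerre} with $r=5$, the ``extra'' brackets are $\mu(e_i,e_j)=c_{ij}e_{i+j+3}$ for $i,j\geq 2$ subject to $i+j+3\leq 8$, i.e. $i+j\leq 5$. Since $i,j\geq 2$, the only possibility is $i=2$, $j=3$, giving a single nontrivial bracket $\mu(e_2,e_3)=c_{23}e_8$. So the constraint from \eqref{aerre} alone already leaves only one free structure constant $c_{23}=:a$, and no Jacobi condition can be violated because there is only one such bracket (any Jacobi relation would involve brackets that land outside the range $\leq 8$ and hence vanish). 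This is the analogue of the step ``from the Jacobi identity and \eqref{aerre} follows that \dots'' in Lemma \ref{lacua}, except here it is even simpler.

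Next I would dispose of the case $a=0$ and normalize $a\neq 0$. The requirement in \eqref{aerre} is that \emph{at least one} $c_{ij}\neq 0$; since $c_{23}$ is the only available structure constant, we must have $a\neq 0$, so the degenerate case does not arise and $\ngo$ genuinely carries the extra bracket. To normalize, I would exhibit an explicit diagonal automorphism $g_a\in\Go$ of the form $g_a=\di(1,\tfrac1a,\ldots,\tfrac1a)$ (exactly as in Lemma \ref{lacua}) and check that $g_a.\mu_a=\mu_1$, scaling the coefficient $a$ to $1$ while preserving all the $\mu(e_1,e_i)=e_{i+1}$ relations. This yields $\mu_a\simeq\mu_1=\hg_1(8)$ for every $a\neq 0$, completing the proof.

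The main obstacle here is essentially nonexistent compared with the earlier lemmas: because the dimension bound forces a \emph{single} nontrivial extra bracket, there is no curve of pairwise non-isomorphic algebras to separate (so no appeal to Lemma \ref{le} or to rank-of-$\ad_x$ invariants is needed, unlike in Lemma \ref{lados}), and there is no nontrivial Jacobi computation. The only point that deserves care is verifying that the diagonal scaling $g_a$ acts correctly on \emph{both} the $e_{i+1}=\mu(e_1,e_i)$ chain and the single bracket $\mu(e_2,e_3)=ae_8$ simultaneously; one must confirm that the chosen entries of $g_a$ are compatible with the grading so that the normalization of $a$ does not disturb the length-one brackets. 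This is a routine but necessary consistency check, entirely parallel to the one carried out in the proof of Lemma \ref{lacua}.
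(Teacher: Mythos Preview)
Your proposal is correct and follows exactly the paper's approach: identify that \eqref{aerre} with $r=5$, $n=8$ forces the single extra bracket $\mu(e_2,e_3)=ae_8$ with $a\neq 0$, and then rescale by a diagonal $g_a$ to reach $\mu_1$. One small slip to fix when you carry out the ``routine consistency check'': with $g_a=\di(1,\tfrac1a,\ldots,\tfrac1a)$ one gets $g_a.\mu_1=\mu_a$ (equivalently $g_a^{-1}.\mu_a=\mu_1$), not $g_a.\mu_a=\mu_1$ as written, just as in the paper's proof of Lemma~\ref{lacua}.
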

\begin{proof}
From (\ref{aerre}) we get that the algebras of class $A_{5}$ and
rank $1$ are:
$$
\begin{array}{lll}
\mu_{a}(e_{1},e_{j})=e_{j+1} \qquad j=2,\ldots,7 &
\mu_{a}(e_{2},e_{3})=a e_{8},&  a \neq 0.
\end{array}
$$

Since $a \neq 0$ then $\mu_{a} \simeq \mu_{1}$ by taking
$g_{a}=\di(1,\frac 1a,\ldots,\frac 1a).$
\end{proof}

\begin{lemma}\label{lbdos}
Let $\ngo$ be a Lie algebra of class $B_{2}$ and dimension $8$,
then $\ngo$ is isomorphic to
$$
\begin{array}{lll}
\bg(8) &  \mu(e_{1},e_{i})=e_{i+1}, \quad i=2,\ldots,6, &
\mu(e_{2},e_{3})=-\unm e_{5}, \\
       &  \mu(e_{2},e_{4})=-\unm e_{6}, & \mu(e_{2},e_{5})=-\frac 32e_{7} \\
       &  \mu(e_{3},e_{4})=e_{7}, &  \mu(e_{i},e_{9-i})=(-1)^{i}e_{8}, \quad i=2,3,4.\\
\end{array}
$$
\end{lemma}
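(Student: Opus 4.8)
The plan is to start from the general shape of a class $B_2$ algebra given by (\ref{berre}) with $n=8$ and $r=2$. Here $\ad_{e_1}$ sends $e_i\mapsto e_{i+1}$ for $i=2,\ldots,6$ (and kills $e_7$), the only interior brackets allowed by $i,j\geq 2$ and $i+j\leq 7$ are
$$\mu(e_2,e_3)=c_{23}e_5,\quad \mu(e_2,e_4)=c_{24}e_6,\quad \mu(e_2,e_5)=c_{25}e_7,\quad \mu(e_3,e_4)=c_{34}e_7,$$
and the remaining nonzero brackets are fixed by the class to be $\mu(e_i,e_{9-i})=(-1)^{i+1}e_8$ for $i=2,3,4$. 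Thus an algebra of class $B_2$ in dimension $8$ is determined by the four scalars $c_{23},c_{24},c_{25},c_{34}$, at least one of which is nonzero (by the requirement stated after (\ref{berre})), and the whole task reduces to pinning these down and then normalizing.

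First I would extract the constraints coming from the Jacobi identity. Using that $\ad_{e_1}$ is a derivation, i.e. $[e_1,[e_i,e_j]]=[[e_1,e_i],e_j]+[e_i,[e_1,e_j]]$, the pairs $(i,j)=(2,3)$ and $(2,4)$ give respectively $c_{23}=c_{24}$ and $c_{25}=c_{24}-c_{34}$. A third, independent relation comes from the full Jacobi identity on the triple $(e_2,e_3,e_4)$: its three summands all land in $\RR e_8$ through the fixed brackets $\mu(e_4,e_5)$, $\mu(e_2,e_7)$, $\mu(e_3,e_6)$, and carefully tracking the signs $(-1)^{i+1}$ yields $c_{23}+c_{24}+c_{34}=0$. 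Solving these three relations shows that everything is governed by a single parameter $a:=c_{24}$, namely
$$(c_{23},c_{24},c_{25},c_{34})=(a,\;a,\;3a,\;-2a),$$
and the condition that some $c_{ij}\neq 0$ forces $a\neq 0$. (All remaining Jacobi identities, which I would verify only to be safe, hold automatically and impose no further condition on $a$.)

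Finally I would normalize $a$ by a diagonal change of basis. Taking $g=\di(g_1,\ldots,g_8)$ and imposing $g_{i+1}=g_1g_i$ for $i=2,\ldots,6$ together with $g_8=g_2g_7$ preserves both the brackets $\mu(e_1,e_i)=e_{i+1}$ and the normalized $e_8$-brackets, while every interior constant is multiplied by the common factor $g_1^2g_2^{-1}$. Choosing $g_1=1$ and $g_2=-2a$ (possible since $a\neq 0$) rescales $a$ to $-\tfrac12$, and the Jacobi relations above then deliver exactly the constants of $\bg(8)$; the only remaining discrepancy is that the class convention $(-1)^{i+1}$ differs from the $(-1)^{i}$ of $\bg(8)$ by a global sign on $e_8$, which is removed by composing with $e_8\mapsto -e_8$ (this fixes all interior brackets). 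The main obstacle is purely the bookkeeping in the middle step: getting every sign in the $(e_2,e_3,e_4)$ Jacobi identity right, and checking that the scaling factor is genuinely common to all four interior constants while keeping the $e_8$-brackets normalized. Both become mechanical once the grading $g_i=g_1^{\,i-2}g_2$ ($2\leq i\leq 7$), $g_8=g_1^5g_2^2$ is written out.
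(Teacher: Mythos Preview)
Your proof is correct and follows essentially the same route as the paper: derive the one-parameter family $(c_{23},c_{24},c_{25},c_{34})=(a,a,3a,-2a)$, $a\neq 0$, from Jacobi, and then normalize by a diagonal automorphism $g=\di(1,\lambda,\ldots,\lambda,\lambda^2)$. The paper's version is terser (it simply states the Jacobi outcome and takes $\lambda=a$, noting all $\mu_a$ are isomorphic), whereas you spell out the three Jacobi relations and choose $\lambda=-2a$ to land directly on $a=-\tfrac12$, and you also make explicit the harmless $e_8\mapsto -e_8$ needed to reconcile the $(-1)^{i+1}$ convention of $B_r$ with the $(-1)^{i}$ in the displayed $\bg(8)$.
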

\begin{proof}
From (\ref{berre}) and the Jacobi identity follows:
$$
\begin{array}{lll}
\mu_{a}(e_{1},e_{j})=e_{j+1} \qquad j=2,\ldots,6, & \mu_{a}(e_{2},e_{3})=a e_{5} \\
\mu_{a}(e_{2},e_{4})=a e_{6} &  \mu_{a}(e_{2},e_{5})=3a e_{7}, \\
\mu_{a}(e_i,e_{9-i})=(-1)^{i+1}e_8, \quad i=2,3,4, &
\mu_{a}(e_{3},e_{4})=-2ae_{7},
\end{array}
$$ with $a \neq 0$.

Like $a \neq 0$, then $\mu_a \simeq \mu_{-\frac 12}$ since by taking
$g_a=\di(1,a,\ldots,a,a^{2})$ we get that $g_a.\mu_a=\mu_{1}$.
\end{proof}

\begin{lemma}\label{lbtres}
Let $\ngo$ be a Lie algebra of class $B_{3}$ of dimension $8$. Then
$\ngo$ is isomorphic to
$$
\begin{array}{lll}
\kg_{1}(8) & \mu(e_1,e_i)=e_{i+1}, \quad i=2,\ldots,6, & \mu(e_2,e_3)=e_{6},\\
 & \mu(e_{2},e_{4})=e_{7}, & \mu(e_i,e_{9-i})=(-1)^{i+1}e_8, \quad i=2,3,4,\\
\end{array}
$$
\end{lemma}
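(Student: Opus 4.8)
The plan is to follow the same strategy as in the proofs of Lemmas \ref{lacua} and \ref{lbdos}: first use the general shape (\ref{berre}) together with the Jacobi identity to cut the family of class-$B_3$ brackets down to a one-parameter family $\mu_a$, and then exhibit an explicit diagonal change of basis carrying $\mu_a$ onto $\kg_1(8)$. Since the lemma only asserts that $\ngo$ is isomorphic to $\kg_1(8)$ (and not that several non-isomorphic models occur), no separating argument in the spirit of Lemma \ref{lados} will be needed.

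First I would write out (\ref{berre}) for $n=8$, $r=3$. The only ``middle'' products $\mu(e_i,e_j)=c_{ij}e_{i+j+1}$ allowed by the constraint $i+j+1\leq 7$ (equivalently $i+j\leq 6$) are $\mu(e_2,e_3)=c_{23}e_6$ and $\mu(e_2,e_4)=c_{24}e_7$; every remaining bracket among $e_2,\ldots,e_8$ is forced to vanish for degree reasons, the top products are $\mu(e_i,e_{9-i})=(-1)^{i+1}e_8$ for $i=2,3,4$, and the chain is $\mu(e_1,e_i)=e_{i+1}$ for $i=2,\ldots,6$. Thus such an algebra is determined by the pair $(c_{23},c_{24})$, with at least one entry nonzero. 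The binding Jacobi computation is on the triple $(e_1,e_2,e_3)$: expanding $\mu(\mu(e_1,e_2),e_3)+\mu(\mu(e_2,e_3),e_1)+\mu(\mu(e_3,e_1),e_2)=0$ and using $\mu(e_1,e_6)=e_7$ and $\mu(e_2,e_4)=c_{24}e_7$ gives the single relation $c_{23}=c_{24}=:a$. A quick check of the remaining triples (for instance $(e_1,e_2,e_6)$, $(e_1,e_3,e_5)$ and $(e_2,e_3,e_4)$) shows that they are automatically satisfied once $c_{23}=c_{24}$, so no further constraint appears; combined with the requirement that some $c_{ij}\neq 0$, this forces $a\neq 0$. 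Hence every class-$B_3$ algebra of dimension $8$ is one of the brackets $\mu_a$, $a\neq 0$, with $\mu_a(e_2,e_3)=ae_6$ and $\mu_a(e_2,e_4)=ae_7$ together with the chain and top products above.

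Finally I would rescale. Taking $g_a=\di(1,a,a,a,a,a,a,a^2)\in\Go$, one records that a structure constant $c$ attached to a product $\mu_a(e_i,e_j)=c\,e_k$ becomes $c\,g_k/(g_ig_j)\,e_k$ under $g_a\,.\,\mu_a$. This factor $g_k/(g_ig_j)$ equals $1$ on each chain product, equals $1/a$ on the two middle products (sending $a$ to $1$), and equals $1$ on the three top products (since $g_8/(g_2g_7)=g_8/(g_3g_6)=g_8/(g_4g_5)=a^2/a^2=1$); therefore $g_a\,.\,\mu_a=\mu_1=\kg_1(8)$ and $\ngo\simeq\kg_1(8)$. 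I do not expect a genuine obstacle: unlike the classes $A_2$ and $A_3$, the Jacobi identity here leaves no surviving modular invariant, so the only real work is the bookkeeping of which brackets must vanish and the verification that the single scaling parameter $a$ is absorbed by $g_a$, which is routine.
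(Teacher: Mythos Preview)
Your proof is correct and follows essentially the same approach as the paper: reduce via Jacobi to the one-parameter family $\mu_a$, $a\neq 0$, and then use the diagonal element $g_a=\di(1,a,\ldots,a,a^{2})$ to obtain $g_a.\mu_a=\mu_1=\kg_1(8)$. The paper's own proof is terser (it simply records the outcome of the Jacobi reduction and the rescaling), whereas you spell out the relevant Jacobi triple and the effect of $g_a$ on each type of bracket, but the argument is identical.
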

\begin{proof}
From (\ref{berre}) and the Jacobi identity we obtain:
$$
\begin{array}{lll}
\mu_{a}(e_{1},e_{j})=e_{j+1}, \qquad j=2,\ldots,6, & \mu_{a}(e_{2},e_{3})=a e_{6} \\
\mu_{a}(e_{2},e_{4})=a e_{7}, & \mu_{a}(e_i,e_{9-i})=(-1)^{i+1}e_8,
\quad i=2,3,4, \\
\end{array}
$$ with $a \neq 0.$

Then $g_a.\mu_a=\mu_1$ for $g_a=\di(1,a,a,\ldots,a,a^{2}).$
\end{proof}

\begin{lemma}\label{lbcua}
Up to isomorphism  there is only one Lie algebra of class $B_{4}$
of dimension $8$, defined by:
$$
\begin{array}{lll}
\sg_1(8) & \mu(e_{1},e_{j})=e_{j+1} \qquad j=2,\ldots,6,  &
\mu(e_{2},e_{3})=e_{7},\\
& \mu(e_i,e_{9-i})=(-1)^{i+1}e_8,\quad i=2,3,4.
\end{array}
$$

\end{lemma}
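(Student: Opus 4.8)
The plan is to follow the same pattern as in Lemmas \ref{lacua} and \ref{lbtres}: first extract the general shape of a class $B_4$ bracket in dimension $8$ directly from (\ref{berre}), and then normalize the single free structure constant by a diagonal change of basis.

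First I would specialize (\ref{berre}) to $n=8$, $r=4$. The condition $i+j+r-2\le n-1$, i.e.\ $i+j\le 5$ with $i,j\ge 2$, leaves only the pair $(i,j)=(2,3)$, so the sole product of the form $c_{ij}e_{i+j+r-2}$ is $\mu(e_2,e_3)=c_{23}e_7$. Writing $a:=c_{23}$ and recording the fixed anti-diagonal products $\mu(e_i,e_{9-i})=(-1)^{i+1}e_8$ for $i=2,3,4$ prescribed by (\ref{berre}), the candidate brackets are
\[
\mu_a(e_1,e_j)=e_{j+1}\ (j=2,\ldots,6),\quad \mu_a(e_2,e_3)=ae_7,\quad \mu_a(e_i,e_{9-i})=(-1)^{i+1}e_8\ (i=2,3,4),
\]
with $a\ne 0$ by the defining requirement that at least one $c_{ij}\ne 0$.

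Next I would verify the Jacobi identity. Since $e_8$ is central and the only nontrivial triples involve $e_1$ together with two of $e_2,\ldots,e_7$, a short case check shows that every Jacobi relation holds for arbitrary $a$, so no further constraint on $a$ appears. Here it is essential that $\mu_a(e_1,e_7)=0$ in class $B$ (this is exactly what distinguishes class $B$ from class $A$, where $e_8=\mu(e_1,e_7)$). This is the step I expect to carry what little weight the argument has, as it must confirm that the prescribed alternating signs $(-1)^{i+1}$ on the anti-diagonal are compatible with $\mu_a(e_2,e_3)=ae_7$; the relevant cancellations come out automatically from those signs.

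Finally I would normalize $a$ to $1$. Taking $g_a=\di(1,a,a,a,a,a,a,a^2)\in\Go$, a direct computation of $g_a.\mu_a$ using $g.\mu(X,Y)=g\mu(g^{-1}X,g^{-1}Y)$ gives $(g_a.\mu_a)(e_1,e_j)=e_{j+1}$, and $(g_a.\mu_a)(e_2,e_3)=e_7$ (the factor $a^{-2}$ coming from the two weight-$a$ arguments cancels against the $a$ in $ae_7$ and the $a$ in $g_ae_7$), while $(g_a.\mu_a)(e_i,e_{9-i})=(-1)^{i+1}e_8$ (the $a^{-2}$ being absorbed by the $a^2$ in $g_ae_8$). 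Hence $g_a.\mu_a=\mu_1$, the bracket of $\sg_1(8)$, for every $a\ne 0$, so all members of the family are isomorphic and $\sg_1(8)$ is the unique class $B_4$ algebra of dimension $8$ up to isomorphism. In contrast to Lemma \ref{lados}, no non-isomorphism argument is needed, since there is a single normalized representative.
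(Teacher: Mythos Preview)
Your argument is correct and follows essentially the same route as the paper: specialize (\ref{berre}) to $n=8$, $r=4$ to obtain the one-parameter family $\mu_a$ with $a\ne 0$, and then use the diagonal element $g_a=\di(1,a,\ldots,a,a^{2})$ to show $g_a.\mu_a=\mu_1$. The only addition is your explicit Jacobi check, which the paper omits.
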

\begin{proof}
The algebras of (\ref{berre}) for $r=4$ are:
$$
\begin{array}{lll}
\mu_{a}(e_{1},e_{j})=e_{j+1} \qquad j=2,\ldots,6,  &
\mu_{a}(e_{2},e_{3})=a e_{7},\\
\mu_a(e_i,e_{9-i})=(-1)^{i+1}e_8, \qquad i=2,3,4,
\end{array}
$$with $a \neq 0$.

Let $g_a=\di(1,a,\ldots,a,a^{2})$ be, then $\mu_a \simeq \mu_1$,
since $g_a.\mu_a=\mu_1.$
\end{proof}

The results obtained in this section can be summarized in the
following theorem.

\begin{theorem}\label{alg}
Let $\ngo$ be  an $\NN$-graded, filiform Lie algebra of dimension
$8$. Then $\ngo$ is isomorphic to one and only one of the Lie
algebras in Table \ref{ta}.
\end{theorem}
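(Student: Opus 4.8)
The plan is to assemble Theorem \ref{alg} from the structural dichotomy recalled at the beginning of this section together with the class-by-class classification lemmas proved above. First I would recall that, since $\ngo$ is $\NN$-graded, its rank cannot be zero, so $\rgg\ngo \in \{1,2\}$. In the case $\rgg\ngo = 2$, the result quoted from \cite[Section 3]{GzHkm} with $n=8$ shows that $\ngo$ is isomorphic to $\mg_0(8)$ or to $\mg_1(8)$; note that $8$ is even, so $\mg_1(8)$ does occur. These are precisely the two rank-two entries of Table \ref{ta}.

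It then remains to treat $\rgg\ngo = 1$. Here $\ngo$ belongs to exactly one of the classes $A_r$ or $B_r$, and substituting $n=8$ into the admissible ranges $2 \leq r \leq n-3$ and $2 \leq r \leq n-4$ leaves only $A_2, A_3, A_4, A_5$ and $B_2, B_3, B_4$. I would then invoke the corresponding lemma for each class: Lemma \ref{lados} for $A_2$ (yielding $\mg_2(8)$ and the family $\ggo_{\alpha}(8)$), Lemma \ref{latres} for $A_3$ (yielding $\ag_t(8)$ and $\cg_{1,0}(8)$), Lemma \ref{lacua} for $A_4$ (yielding $\dg_1(8)$), Lemma \ref{lacin} for $A_5$ (yielding $\hg_1(8)$), Lemma \ref{lbdos} for $B_2$ (yielding $\bg(8)$), Lemma \ref{lbtres} for $B_3$ (yielding $\kg_1(8)$), and Lemma \ref{lbcua} for $B_4$ (yielding $\sg_1(8)$). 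Together with the rank-two algebras this exhausts Table \ref{ta}, so every $\NN$-graded filiform Lie algebra of dimension $8$ is isomorphic to at least one entry.

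The substance of the ``one and only one'' assertion is that distinct entries are pairwise non-isomorphic, and I would organise this in three layers. First, the rank invariant separates $\mg_0(8), \mg_1(8)$ from all the rank-one algebras, and $\mg_0(8) \not\simeq \mg_1(8)$ since the invariant $\I_{s,j}$ of Lemma \ref{le} distinguishes them (for instance $\rgg(\ad_{e_2})$ equals $1$ for $\mg_0(8)$ but $2$ for $\mg_1(8)$). Second, two rank-one algebras in different classes cannot be isomorphic by the argument already recalled in the section: an isomorphism preserves the pre-Einstein derivation up to conjugation and scaling, whereas for $n=8$ the eigenvalue tuples $(1,r,r+1,\ldots,r+6)$ of $A_r$ and $(1,r,r+1,\ldots,r+5,2r+5)$ of $B_r$ all begin with $1$ and are pairwise distinct, hence no two of them are proportional. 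Third, within each class the cited lemma already yields uniqueness: the parametrized families $\ggo_{\alpha}(8)$ and $\ag_t(8)$ have pairwise non-isomorphic members by Lemmas \ref{lpados} and \ref{lpatres}, and in classes $A_2$ and $A_3$ the special algebras $\mg_2(8)$ and $\cg_{1,0}(8)$ are shown to lie outside those families.

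Since all the genuinely computational work has been carried out in the preceding lemmas, the only care required is bookkeeping: confirming that the admissible ranges of $r$ are exhausted and that the eigenvalue-pattern argument separates every pair of distinct classes. This last check is the main point to verify, and it is immediate here because each normalised tuple above starts with $1$ and they are all distinct, so proportionality forces equality. Collecting the existence step with these three distinctness layers gives the ``one and only one'' statement and completes the proof.
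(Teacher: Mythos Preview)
Your proposal is correct and matches the paper's approach: the paper presents Theorem \ref{alg} simply as a summary of the preceding structural dichotomy (rank $2$ from \cite{GzHkm}, rank $1$ falling into the classes $A_r$, $B_r$) together with Lemmas \ref{lados}--\ref{lbcua}, and your write-up is exactly this assembly with the bookkeeping made explicit. One small remark: your parenthetical that $\rgg(\ad_{e_2})$ distinguishes $\mg_0(8)$ from $\mg_1(8)$ is stated a bit loosely, since an isomorphism need not send $e_2$ to $e_2$; a cleaner invariant is the second derived subalgebra ($0$ for $\mg_0(8)$, nonzero for $\mg_1(8)$), though the paper itself simply relies on \cite{GzHkm} here.
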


\begin{table}
{\center
\begin{tabular}{|c|c|}
  \hline

 $\mg_{0}(8)$ & $\begin{array}{c}
                \mu(e_1,e_i)=e_{i+1}, \quad i=2,\ldots,7, \\
                \end{array}$\\
\hline $\mg_{1}(8)$ & $\begin{array}{c}
               \mu(e_1,e_i)=e_{i+1}, \quad i=2,\ldots,7, \\
               \mu(e_i,e_{9-i})=(-1)^{i}e_{8}, \quad
               i=2,\ldots,7.\\
               \end{array}$\\
\hline $\mg_{2}(8)$ & $\begin{array}{c}
                   \mu(e_1,e_i)=e_{i+1}, \quad i=2,\ldots,7, \\
                    \mu(e_2,e_i)=e_{i+2}, \quad i=3,\ldots,6.\\
                 \end{array}$\\

\hline $\ggo_{\alpha}(8), \alpha \in \RR$ & $\begin{array}{c}
                       \mu(e_1,e_i)=e_{i+1}, \quad i=2,\ldots,7,\\
                       \mu(e_2,e_3)=(2+ \alpha)e_{5}, \\
                       \mu(e_2,e_4)=(2+ \alpha)e_{6},\\
                       \mu(e_2,e_5)=(1+ \alpha)e_{7},\\
                       \mu(e_2,e_6)=\alpha e_{8},\\
                       \mu(e_3,e_4)=e_{7},\\
                       \mu(e_3,e_5)=e_{8}.
                     \end{array}$ \\
\hline $\ag_{t}(8), t \in \RR$ & $\begin{array}{c}
                             \mu(e_1,e_i)=e_{i+1}, \quad i=2,\ldots,7,\\
                             \mu(e_{2},e_{3})=(t+1)e_{6}, \quad \mu(e_{2},e_{4})=(t+1)e_{7},\\
                             \mu(e_{2},e_{5})=te_{8}, \quad
                             \mu(e_{3},e_{4})=e_{8},
                             \end{array}$ \\
\hline $\cg_{1,0}(8)$ & $\begin{array}{c}
                       \mu(e_1,e_i)=e_{i+1}, \quad i=2,\ldots,7,\\
                       \mu(e_2,e_3)=e_{6}, \quad \mu(e_2,e_4)=e_{7},\\
                       \mu(e_2,e_5)=e_{8},
                     \end{array}$ \\
\hline $\dg_{1}(8)$ & $\begin{array}{c}
                      \mu(e_1,e_i)=e_{i+1}, \quad i=2,\ldots,7,\\
                      \mu(e_{2},e_{3})=e_{7}, \quad
                      \mu(e_{2},e_{4})=e_{8},\\
                      \end{array}$\\
\hline $\hg_{1}(8)$ & $\begin{array}{c}
                       \mu(e_1,e_i)=e_{i+1}, \quad i=2,\ldots,7,\\
                       \mu(e_{2},e_{3})=e_{8},\\
                       \end{array}$\\
\hline $\bg(8)$ & $\begin{array}{c}
                   \mu(e_{1},e_{i})=e_{i+1}, \quad i=2,\ldots,6,\\
                    \mu(e_{2},e_{3})=-\unm e_{5}, \quad
                    \mu(e_{2},e_{4})=-\unm e_{6},\\
                    \mu(e_{2},e_{5})=-\frac 32e_{7} \quad
                    \mu(e_{3},e_{4})=e_{7}, \\
                    \mu(e_{i},e_{9-i})=(-1)^{i+1}e_{8}, \quad
                    i=2,3,4.
                 \end{array}$ \\
\hline $\kg_{1}(8)$ & $\begin{array}{c}
                   \mu(e_1,e_i)=e_{i+1}, \quad i=2,\ldots,6, \\
                   \mu(e_2,e_3)=e_{6}, \quad \mu(e_{2},e_{4})=e_{7}, \\
                   \mu(e_i,e_{9-i})=(-1)^{i+1}e_8,
                 \end{array}$\\

\hline $\sg_1(8)$ & $\begin{array}{c}
                       \mu(e_1,e_i)=e_{i+1}, \quad i=2,\ldots,6, \\
                       \mu(e_2,e_3)=e_{7},\\
                       \mu(e_i,e_{9-i})=(-1)^{i+1}e_8,\quad
                       i=2,3,4,\\
                      \end{array}$\\
\hline
\end{tabular}}
\vs \caption{$\NN$-graded filiform Lie algebras of dimension $8$ up
to isomorphism.}\label{ta}
\end{table}

\section{Classification of filiform Einstein nilradicals of dimension
$8$}\label {cle}

In this section we will determine which filiform Lie algebras of
dimension $8$ are Einstein nilradicals. To do so we will apply
Corollary \ref{ni}.

Recall that if $\rgg \ngo=0$ then $\ngo$ does not admit an
$\NN$-gradation so $\ngo$ is not Einstein nilradical. If $\rgg
\ngo=2$, then $\ngo$ is Einstein nilradical (see \cite[Theorem
4.2]{finding} for $\mg_{0}(8)$ and \cite[Theorem 35]{Pyn} for
$\mg_{1}(8)$). Therefore, to obtain a complete classification of
filiform Einstein nilradicals of dimension $8$ we must study only
the algebras of Table \ref{ta} which has rank $1$.

Every algebra $\ngo$ from $A_{r}$ or $B_{r}$ has only one semisimple
derivation, which is automatically a pre-Einstein derivation (up to
conjugation and scaling). As all eigenvalues of such as a derivation
are simple (they are proportional to $(1,r,r+1,\ldots,n+r-2)$ for
$A_{r}$ and to $(1,r,r+1,\ldots,n+r-3,n+2r-3)$ for $B_{r}$), the
question of whether or not $\ngo$ is an Einstein nilradical is
answered by Corollary \ref{ni}.

In what follows, we will analyze three cases. The first case will be
a Lie algebra which is not an Einstein nilradical, the second one an
algebra which fails to be, and the third case will be a curve of Lie
algebras that depend on a real parameter. This curve has only one
Lie algebra which is not an Einstein nilradical. The other cases are
analogous to the three mentioned above.

We considerer in $\RR^{n}$ the canonical inner product
$(\cdot,\cdot)$ and $\{f_{1},\ldots,f_{n}\}$ the canonical basis.\\

\textbf{Case 1.} In this case, we consider the Lie algebra $\cg_{1,0}(8)$ of dimension $8$. $\cg_{1,0}(8)$ is
nilpotent and has a pre-Einstein derivation with simple eigenvalues, $\phi(e_{1})=e_{1}$,
$\phi(e_{i})=(i+1)e_{i} \quad i \geq 2$, with eigenvalues $\{1,3,4,5,6,7,8,9\}$.

The finite set $\mathbf{F}=\{\alpha_{ij}^{k}:c_{ij}^{k} \neq 0\}$
is given by:
$$
\begin{array}{ccc}
\{(-1,-1,1,0,0,0,0,0), & (-1,0,-1,1,0,0,0,0), & (-1,0,0,-1,1,0,0,0), \\
\ (-1,0,0,0,-1,1,0,0), & (-1,0,0,0,0,-1,1,0), & (-1,0,0,0,0,0,-1,1), \\
\ (0,-1,-1,0,0,1,0,0), & (0,-1,0,-1,0,0,1,0), & (0,-1,0,0,-1,0,0,1)\}, \\
\end{array}
$$ \no and with respect to this enumeration the matrix $U$ defined in
Section \ref{pre} is

$$
U=\left[%
\begin{smallmatrix}
  3 & 0 & 1 & 1 & 1 & 1 & 0 & 1 & 1  \\
  0 & 3 & 0 & 1 & 1 & 1 & 1 & -1 & 0 \\
  1 & 0 & 3 & 0 & 1 & 1 & 0 & 1 & -1 \\
  1 & 1 & 0 & 3 & 0 & 1 & 1 & 0 & 1 \\
  1 & 1 & 1 & 0 & 3 & 0 & -1 & 1 & 0 \\
  1 & 1 & 1 & 1 & 0 & 3 & 0 & -1 & 1 \\
  0 & 1 & 0 & 1 & -1 & 0 & 3 & 1 & 1 \\
  1 & -1 & 1 & 0 & 1 & -1 & 1 & 3 & 1 \\
  1 & 0 & -1 & 1 & 0 & 1 & 1 & 1 & 3 \\
\end{smallmatrix}%
\right].
$$
\bigbreak Then, by using Maple, we obtain that the solutions to the
system (\ref{u}) are
$$
\begin{array}{rcl}
v & = & (-\frac9{281},\frac {100}{281}-t_{1},\frac
{166}{281}-t_{1}-t_{2},\frac{28}{281},\frac 8{281}+t_{1},-\frac
{55}{281}+t_{1}+t_{2},t_{1},t_{2}, \\
 & & \frac {161}{281}-t_{1}-t_{2}),\qquad t_{1}, t_{2} \in \RR.
\end{array}
$$

Then, the system does not have a positive solution because the first
coordinate is always negative, and therefore by
Corollary \ref{ni} $\cg_{1,0}(8)$ is not Einstein nilradical.\\

\textbf{Case 2.} We consider the Lie algebra of rank $1$, of class $A_{4}$ and dimension $8$, $\dg_{1}(8)$, it
is nilpotent and has pre-Einstein derivation given by $\phi(e_{1})=e_{1}$, $\phi(e_{i})=(i+2)e_{i}$, $2\leq i
\leq 7$, with eigenvalues $\{1,4,5,6,7,8,9,10\}$).

The finite set $\mathbf{F}=\{\alpha_{ij}^{k}:c_{ij}^{k} \neq 0\}$
is:
$$
\begin{array}{ccc}
\{(-1,-1,1,0,0,0,0,0), & (-1,0,-1,1,0,0,0,0), & (-1,0,0,-1,1,0,0,0), \\
\ (-1,0,0,0,-1,1,0,0), & (-1,0,0,0,0,-1,1,0), & (-1,0,0,0,0,0,-1,1), \\
\ (0,-1,-1,0,0,0,1,0), & (0,-1,0,-1,0,0,0,1)\}, & \\
\end{array}
$$
\no and with respect to this enumeration, the matrix $U$ is

$$
U=\left[%
\begin{smallmatrix}
  3 & 0 & 1 & 1 & 1 & 1 & 0 & 1  \\
  0 & 3 & 0 & 1 & 1 & 1 & 1 & -1 \\
  1 & 0 & 3 & 0 & 1 & 1 & 0 & 1  \\
  1 & 1 & 0 & 3 & 0 & 1 & 0 & 0  \\
  1 & 1 & 1 & 0 & 3 & 0 & 1 & 0 \\
  1 & 1 & 1 & 1 & 0 & 3 & -1 & 1 \\
  0 & 1 & 0 & 0 & 1 & -1 & 3 & 1 \\
  1 & -1 & 1 & 0 & 0 & 1 & 1 & 3 \\
\end{smallmatrix}%
\right].
$$
\bigbreak Solutions to (\ref{u}) are
$$
\begin{array}{rcl}
v & = & (\frac {3}{62},-\frac
{7}{186}+t_{1},\frac{29}{186},\frac{20}{93},\frac{13}{93},\frac{32}{93}-t_1,\frac{77}{186}-t_1,t_1),\qquad
t_{1} \in \RR.
\end{array}
$$

Then by Corollary \ref{ni}, $\dg_{1}(8)$ is Einstein nilradical because the system has a positive solution
considering $ \frac{7}{186}< t_{1}< \frac
{64}{186}$.\\

\textbf{Case 3.} We consider the Lie algebras of rank $1$, class $A_{2}$ and dimension $8$, $\ggo_{\alpha}(8),
\alpha \in \RR$, which has pre-Einstein derivation with all the eigenvalues simple, ($\phi(e_{1})=e_{1},
\phi(e_{i})=ie_{i}, i \geq 2$, with eigenvalues $\{1,2,3,4,5,6,7,8 \}$).

If $\alpha \neq -2,-1,0$ then the finite set
$\mathbf{F}=\{\alpha_{ij}^{k}:c_{ij}^{k} \neq 0\}$ is given by:
$$
\begin{array}{ccc}
\{(-1,-1,1,0,0,0,0,0), & (-1,0,-1,1,0,0,0,0), & (-1,0,0,-1,1,0,0,0), \\
\ (-1,0,0,0,-1,1,0,0), & (-1,0,0,0,0,-1,1,0), & (-1,0,0,0,0,0,-1,1), \\
\ (0,0,-1,-1,0,0,1,0), & (0,0,-1,0,-1,0,0,1), & (0,-1,-1,0,1,0,0,0), \\
\ (0,-1,0,-1,0,1,0,0), & (0,-1,0,0,-1,0,1,0), & (0,-1,0,0,0,-1,0,1)\},\\
\end{array}
$$
Solutions of (\ref{u}) are
$$
\begin{array}{rcl}
v & = & (-\frac3{17}+t_{2}+t_{1}, -\frac
9{17}+t_{2}+t_{4}+t_{1}+t_{5}+t_{3}, -\frac
4{17}+t_{2}+t_{4}+t_{5}, \\
& & \frac {12}{17}-t_{2} -t_{4}-t_{1}-t_{3},\frac
{11}{17}-t_{2}-t_{4}-t_{1}-t_{5}, \frac 7{17}-t_{2}-t_{5}, t_{1},
t_{2}, \\
 & & \frac {14}{17}-t_{2}-t_{4}-t_{1}-t_{5} -t_{3}, t_{3},
t_{4}, t_{5}),\qquad t_{1},t_{2},t_{3},t_{4},t_{5} \in \RR.
\end{array}
$$

Therefore, if $\alpha \neq -2,-1,0, \ggo_{\alpha}(8)$ is Einstein nilradical by Corollary \ref{ni} because the
system has a positive solution, for example, taking $t_{1}= \frac {2}{17}, t_{2}= \frac {2}{17}, t_{3}= \frac
{1}{17}, t_{4}= \frac {5}{17}, t_{5}= \frac
{1}{17}$.\\

\begin{table}[h]
{\center
\begin{tabular}{|c|c|c|c|}
  \hline
Algebra & $\begin{array}{c}
             \mbox {Einstein} \\
             \mbox{nilradical}
             \end{array}$  &  $\begin{array}{c}
                               \mbox{No Nilradi-}\\
                               \mbox{cal Einstein}
                               \end{array}$ & eigenvalue type \\

\hline

$\mg_{0}(8)$ & \checkmark & & {\small $\begin{array}{c}
                                      1<26<27<28<29< \\
                                      30<31<32
                                       \end{array}$}\\

\hline

$\mg_{1}(8)$ & \checkmark & & {\small $\begin{array}{c}
                                      10<123<133<143<153< \\
                                      163<173<296
                                       \end{array}$}\\

\hline

$\mg_{2}(8)$ & & \checkmark & \\ 

\hline

$\ggo_{\alpha}(8)$, $\alpha \neq -2$ & \checkmark &  & {\small $1<2<3<4<5<6<7<8$}\\

\hline

$\ggo_{-2}(8)$ &  & \checkmark & \\

\hline

$\ag_{t}(8)$, $t \neq -1$ & \checkmark &  & {\small $1<3<4<5<6<7<8<9$}\\

\hline

$\ag_{-1}(8)$ &  & \checkmark & \\

\hline

$\cg_{1,0}(8)$ & & \checkmark & \\ 

\hline

$\dg_{1}(8)$ & \checkmark & & {\small $1<4<5<6<7<8<9<10$}\\

\hline

$\hg_{1}(8)$ & \checkmark & & {\small $1<5<6<7<8<9<10<11$}\\

\hline

$\bg(8)$ & \checkmark & &{\small $1<2<3<4<5<6<7<9$}\\

\hline

$\kg_{1}(8)$ & \checkmark &  & {\small $1<3<4<5<6<7<8<11$} \\

\hline

$\sg_1(8)$ & \checkmark & &{\small $1<4<5<6<7<8<9<13$}\\

\hline

\end{tabular}}
\vs \caption{Classification of filiform Einstein nilradicals of
dimension $8$}\label{tad}
\end{table}

If $\alpha=0$ then the finite set
$\mathbf{F}=\{\alpha_{ij}^{k}:c_{ij}^{k} \neq 0\}$ is
$$
\begin{array}{ccc}
\{(-1,-1,1,0,0,0,0,0), & (-1,0,-1,1,0,0,0,0), & (-1,0,0,-1,1,0,0,0), \\
\ (-1,0,0,0,-1,1,0,0), & (-1,0,0,0,0,-1,1,0), & (-1,0,0,0,0,0,-1,1), \\
\ (0,0,-1,-1,0,0,1,0), & (0,0,-1,0,-1,0,0,1), & (0,-1,-1,0,1,0,0,0), \\
\ (0,-1,0,-1,0,1,0,0), & (0,-1,0,0,-1,0,1,0)\}, &\\
\end{array}
$$

Observe that the only difference between the matrixes $U$ of the
cases $\alpha \neq -2,-1,0$ and $\alpha=0$ is that the second one is
obtained by erasing the last row and the last column of the first,
since the set $\mathbf{F}$ of the case $\alpha=0$ does not contain
the vector $\alpha_{26}^{8}$.

Solutions to $Uv=[1]$ are
$$
\begin{array}{rcl}
v & = & (\frac {11}{17}-t_{2}-t_{3}-t_{4},\frac 5{17}-t_{2}, \frac
{10}{17}-t_{2}-t_{3}-t_{1}, -\frac 2{17}+t_{2}, t_{2}+t_{3}-\frac
3{17}, \\
 & & -\frac 7{17}+t_{2}+t_{3}+t_{1}+t_{4}, t_{1}, \frac
{14}{17}-t_{2}-t_{3}-t_{1}-t_{4}, t_{2}, t_{3}, t_{4}),\qquad
t_{1},\\
 & & t_{2},t_{3},t_{4} \in \RR.
\end{array}
$$

This implies that $\ggo_{0}(8)$ is Einstein nilradical because by taking $t_{1}=\frac {2}{17},t_{2}=\frac
{4}{17},t_{3}=\frac {1}{17},t_{4}=\frac
{1}{17}$ we obtain a positive solution.\\

If $\alpha=-1$, $\mathbf{F}=\{\alpha_{ij}^{k}:c_{ij}^{k} \neq 0\}$
is given by
$$
\begin{array}{ccc}
\{(-1,-1,1,0,0,0,0,0), & (-1,0,-1,1,0,0,0,0), & (-1,0,0,-1,1,0,0,0), \\
\ (-1,0,0,0,-1,1,0,0), & (-1,0,0,0,0,-1,1,0), & (-1,0,0,0,0,0,-1,1), \\
\ (0,0,-1,-1,0,0,1,0), & (0,0,-1,0,-1,0,0,1), & (0,-1,-1,0,1,0,0,0), \\
\ (0,-1,0,-1,0,1,0,0), & (0,-1,0,0,0,-1,0,1)\}, &\\
\end{array}
$$

Solution of (\ref{u}) are
$$
\begin{array}{rcl}
v & = & (\frac 8{17}-t_1-t_4, \frac 2{17}-t_1+t_3, -\frac
4{17}+t_4+t_2, \frac 1{17}+t_1-t_3+t_4, t_1,\\
& & \frac 7{17}-t_4-t_2, \frac {11}{17}-t_1-t_4-t_2, t_2, \frac
3{17}+t_1-t_3, t_3, t_4),\\
& & t_{1},t_{2},t_{3},t_{4} \in
 \RR.
\end{array}
$$
Therefore, $\ggo_{-1}(8)$ is Einstein nilradical taking, for
example, $t_{1}= \frac 1{17}, t_{2}= \frac 4{17}, t_{3}= \frac
1{17}, t_{4}= \frac 1{17}$.\\

If $\alpha=-2$, the finite set $\mathbf{F}$ is
$$
\begin{array}{ccc}
\{(-1,-1,1,0,0,0,0,0), & (-1,0,-1,1,0,0,0,0), & (-1,0,0,-1,1,0,0,0), \\
\ (-1,0,0,0,-1,1,0,0), & (-1,0,0,0,0,-1,1,0), & (-1,0,0,0,0,0,-1,1), \\
\ (0,0,-1,-1,0,0,1,0), & (0,0,-1,0,-1,0,0,1), & (0,-1,0,0,-1,0,1,0), \\
\ (0,-1,0,0,0,-1,0,1)\}, & &\\
\end{array}
$$

Solutions of system (\ref{u}) are
$$
\begin{array}{rcl}
v & = & (-\frac 3{17}+t_{2}+t_{1}, \frac 5{17}, \frac
{10}{17}-t_{1}, -\frac 2{17}+t_{3}, -\frac 3{17}, \frac
7{17}-t_{2}-t_{3}, t_{1},
t_{2}, \\
 & & \frac {14}{17}-t_{2}-t_{1}-t_{3},t_{3}),\qquad
t_{1},t_{2},t_{3} \in \RR.
\end{array}
$$

So $\ggo_{-2}(8)$ is not Einstein nilradical because the system does
not has a positive solution since the fifth coordinates of
$v$ is always negative.\\

\begin{theorem}
Let $\ngo$ be a filiform Lie algebra of dimension $8$. Then $\ngo$
is an Einstein nilradical if and only if $\ngo$ is not any of the
following algebras: $\mg_{2}(8)$, $\ggo_{-2}(8)$, $\ag_{-1}(8)$,
$\cg_{1,0}(8)$ (see Table \ref{tad}).
\end{theorem}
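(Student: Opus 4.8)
The plan is to derive the statement mechanically from the two tools already assembled: the isomorphism classification of $\NN$-graded filiform Lie algebras of dimension $8$ (Theorem \ref{alg}, Table \ref{ta}) and the Einstein-nilradical test of Corollary \ref{ni}. The first step is the reduction to $\NN$-graded algebras. Since admitting an $\NN$-gradation is necessary for a nilpotent Lie algebra to be an Einstein nilradical, every filiform $\ngo$ of dimension $8$ that is an Einstein nilradical is isomorphic to exactly one algebra of Table \ref{ta}; the four candidate exceptions $\mg_2(8)$, $\ggo_{-2}(8)$, $\ag_{-1}(8)$, $\cg_{1,0}(8)$ all occur in that table. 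Hence the statement follows once I decide, entry by entry, which of the algebras in Table \ref{ta} are Einstein nilradicals and verify that the failures are exactly these four.

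The rank-two entries are settled by citation: $\mg_0(8)$ and $\mg_1(8)$ are the only algebras of rank $2$, and both are Einstein nilradicals by \cite[Theorem 4.2]{finding} and \cite[Theorem 35]{Pyn} respectively, so neither enters the exceptional list.

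The substance is the rank-one case. For each remaining $\ngo$ in Table \ref{ta} the unique semisimple derivation is, up to scaling, a pre-Einstein derivation with simple eigenvalues, proportional to $(1,r,r+1,\ldots,n+r-2)$ in class $A_r$ and to $(1,r,r+1,\ldots,n+r-3,n+2r-3)$ in class $B_r$; thus Corollary \ref{ni} applies directly. For each such $\ngo$ I would run the same four steps: write the eigenbasis $\{e_i\}$ and record the nonzero structure constants $c_{ij}^k$; form $\mathbf{F}=\{f_k-f_i-f_j : c_{ij}^k\neq 0\}\subset\RR^8$; build $U=Y^tY$; and decide whether $Uv=[1]_{N}$ has a solution $v$ with all coordinates strictly positive, $\ngo$ being an Einstein nilradical exactly when such $v$ exists. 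Cases 1--3 carry this out for $\cg_{1,0}(8)$, whose every solution has first coordinate $-\tfrac{9}{281}<0$ (not an Einstein nilradical), for $\dg_1(8)$, where a positive solution exists for $t_1$ in an interval (an Einstein nilradical), and for the curve $\ggo_\alpha(8)$; the entries $\mg_2(8)$, $\ag_t(8)$, $\hg_1(8)$, $\bg(8)$, $\kg_1(8)$, $\sg_1(8)$ are treated by the identical computation, with outcomes recorded in Table \ref{tad}.

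The main obstacle is the two one-parameter families $\ggo_\alpha(8)$ and $\ag_t(8)$, where the set $\mathbf{F}$ is not constant: a structure constant $c_{ij}^k$ may vanish at special parameter values, deleting a column of $Y$ and so changing $U$, $N$, and the feasibility problem. One must therefore split the parameter at exactly the values where some $c_{ij}^k$ vanishes, namely $\alpha\in\{-2,-1,0\}$ for $\ggo_\alpha(8)$ and $t\in\{-1,0\}$ for $\ag_t(8)$, and solve $Uv=[1]_{N}$ on each piece separately. As Case 3 shows, a single genuine exception hides inside each curve: for $\alpha\neq -2$ the system has a positive solution, whereas at $\alpha=-2$ the fifth coordinate of every solution equals $-\tfrac{3}{17}<0$, so $\ggo_{-2}(8)$ is not an Einstein nilradical; the family $\ag_t(8)$ behaves analogously, failing only at $t=-1$. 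Assembling the algebras for which no positive solution exists yields precisely $\mg_2(8)$, $\ggo_{-2}(8)$, $\ag_{-1}(8)$, $\cg_{1,0}(8)$, which is the asserted list.
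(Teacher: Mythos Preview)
Your proposal is correct and follows essentially the same approach as the paper: reduce to the $\NN$-graded classification of Table \ref{ta}, dispose of the rank-two cases by citation, and apply Corollary \ref{ni} case by case to the rank-one algebras, splitting the one-parameter families $\ggo_\alpha(8)$ and $\ag_t(8)$ at the parameter values where a structure constant vanishes. The paper organizes the computation around three representative cases and declares the rest analogous, exactly as you outline.
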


\end{document}